\newcommand{\Reals}[1]{{\rm I\! R}^{#1}}
\newcommand{\jump}[1]{\lbrack\!\lbrack #1 \rbrack\!\rbrack} 
\newcommand{\av}[1]{\{\!\!\{#1\}\!\!\}}
\newtheorem{algorithm}{Algorithm}[section]
\newcommand{\norm}[1]{|\!|#1|\!|}
\newcommand{\tnorm}[1]{|\!|\!|#1|\!|\!|}
\newcommand{\what}[1]{\widehat{#1}}
\newcommand{\wtilde}{\widetilde}
\def\T{\mathcal T}
\def\begp{\begin{pmatrix}}
\def\ep{\end{pmatrix}}
\def\begm{\begin{matrix}}
\def\edma{\end{matrix}}
\def\b0{{\mathbf 0}}
\def\bn{\mathbf{n}}
\def\bt{{\mathbf t}}
\def\bu{\mathbf{u}}
\def\bw{\mathbf{w}}
\def\bz{\mathbf{z}}
 \newsavebox{\savepar}
\numberwithin{equation}{section}
\newcommand{\tend}[1]{\hbox{\oalign{${#1}$\crcr\hidewidth$\scriptscriptstyle\bf{\sim}$\hidewidth}}}
\newcommand{\tenq}[1]{\hbox{\oalign{${#1}$\crcr\hidewidth$\scriptscriptstyle\bf{\approx}$\hidewidth}}}
\begin{document}

\title{Locally conservative immersed finite element method for elliptic interface problems
}

\titlerunning{Locally conservative IFEM for elliptic interface problems}        

\author{Gwanghyun Jo   \and   Do Y. Kwak \and Young-Ju Lee}

\authorrunning{Gwanghyun Jo, Do Y. Kwak and Young-Ju Lee}

\institute{G. Jo \at
              Department of Mathematics, Kunsan National University, Republic of Korea \\
              \email{gwanghyun@kunsan.ac.kr}           
           \and
           D. Y. Kwak \at
              Department of Mathematical Sciences, KAIST, Daejeon, Republic of Korea \\
            \email{kdy@kaist.ac.kr}
             \and
           Y.-J. Lee \at
             Department of Mathematics, Texas State University, U.S.A \\
            \email{yjlee@txstate.edu}
}
\date{Received: date / Accepted: date}
\maketitle

\begin{abstract}
In this paper, we introduce the locally conservative enriched immersed finite element method (EIFEM) to tackle the elliptic problem with interface. The immersed finite element is useful for handling interface with mesh unfit with the interface. However, all the currently available method under IFEM framework may not be designed to consider the flux conservation. We provide an efficient and effective remedy for this issue by introducing a local piecewise constant enrichment, which provides the locally conservative flux. We have also constructed and analyzed an auxiliary space preconditioner for the resulting system based on the application of algebraic multigrid method. The new observation in this work is that by imposing strong Dirichlet boundary condition for the standard IFEM part of EIFEM, we are able to remove the zero eigen-mode of the EIFEM system while still imposing the Dirichlet boundary condition weakly assigned to the piecewise constant enrichment part of EIFEM. A couple of issues relevant to the piecewise constant enrichment given for the mesh unfit to the interface has been discussed and clarified as well. Numerical tests are provided to confirm the theoretical development.
\keywords{Immersed finite element method \and Elliptic equation with interface \and Enriched Galerkin Finite Element \and Auxiliary Space Preconditioner \and Algebraic Multigrid Methods}
\end{abstract}

\section{Introduction}
There are many problems in engineering areas whose governing 
equations are described by a combined system of elliptic equations and transport equations. When solving these problems numerically, accurately predicting flow variables is as important as estimating displacements. One of the criteria for assessing the stability of the numerically resolved flow is whether it has acquired a local conservation. Without a local conservation in flow, the transport variable may suffer a nonphysical result if there is a spurious source.

Various locally conservative schemes were developed in finite element method (FEM) community, which include mixed finite element methods (MFEMs) \cite{raviart1977mixed,brezzi1991mixed}, CG flux \cite{chippada1998projection,hughes2000continuous,larson2004conservative,cockburn2007locally}, and some discontinuous Galerkin (DG) methods combined with post-processing technique for resolving flows \cite{Bastian2003Superconvergence,Ern2007accurate}.
Recently, a conservative method called enriched Galerkin (EG), similar to the DG, but which has a much less DOF than that of DG, is introduced  \cite{sun2009locally,lee2015locally}. EG enriches the conforming finite element space with a piecewise constant. This can produce a locally conservative flux effectively. 

In the perspective of solving the discretized system, the data structure becomes complicated if the nature of the medium underlying the governing equation becomes discontinuous along some interfaces. This is because if there is an interface, one has to use a fitted grid whose nodes are aligned on the interface.
Thus, one may ask if we can devise a conservative scheme which is more efficient when solving a problem with an interface.
Recently, various structured grids based methods were developed, for example, extended finite element methods \cite{moes1999finite,belytschko1999elastic,krysl2000efficient,belytschko2003structured,legrain2005stress}, immersed finite element method (IFEM) \cite{li2003new,li2004immersed,chou2010optimal,kwak2010analysis,Lin2015partially,kwak2017stabilized,jo2019recent}, etc.. See also an interesting contribution by Guzman et al. for elliptic problems with interface with higher-order finite element methods \cite{guzman2016higher}.  
Among many available methods, we consider to use IFEM, which uses a strategy of modifying the basis along the interfaces. IFEM has the advantage that an extra degree of freedom is not required, and thus it can be applied effectively for various equations,  for examples elliptic equations \cite{chou2010optimal,kwak2010analysis,Lin2015partially}, two-phase flows in the porous media \cite{jo2017impes}, elasticity equation \cite{kwak2017stabilized,kyeong2017immersed,jo2020stabilized}, and Poisson Bolzamann equation \cite{Kwon2018Discontinuous}.
In addition, because of its simple data structure, geometric multigrid algorithms have been efficiently applied to solve the discretized system resulting from IFEM \cite{jo2017impes,jo2018geometric}, while the performance of algebraic version multigrid was reported in \cite{feng2014immersed}.

In this work, we propose a novel methodology to compute flows through a nonhomogeneous media using IFEM.
To use a structured grid, the $P_1$-conforming basis functions are modified so that the flux continuity conditions are satisfied.
Next, to keep the  mass conservation, the modified space is enriched by piecewise constant functions.
Since the resulting space is discontinuous across the edge, the bilinear form used to solve the elliptic equation contains a term that compensates the difference in the normal flux from the two adjacent elements along the edges.
After the equation is solved for the displacement variable, the flow variable can be obtained  locally on each edge, which is a  similar technique used in EG.
We name our method an enriched immersed finite element method (EIFEM).
Also, we have developed and analyzed an auxiliary space preconditioner based on algebraic multigrid method for solving the algebraic system arising from EIFEM.

The novelties in this work are that:
1) Both displacement and flow variables can be approximated on a structured grid, regardless of interface,
2) The data structure is simple, thus the effective solver based on subspace correction method can be applied easily,
3) The  pressure variable is obtained by solving a symmetric problem by (preconditioned) conjugate gradient, while the flux variable is computed locally, thus the whole implementation is simpler than MFEM and CG-flux.

The rest of the paper is organized as follows.
In Section 2, we write the model problem and review the IFEM space.
EIFEM is proposed in Section 3 the analysis of it is provided in Section 4.
In Section 5, we present and analyze an auxiliary space preconditioner based on algebraic multigrid method. The numerical results are given in Section 6. Lastly, we offer concluding remarks in Section 7.

Throughout the paper, we shall set $C, C_t, \tilde{C}$ will denote generic positive constants independent of the mesh size $h$ or functions involved, not necessarily the same for each appearance. Oftentimes, we shall use the following notation:
$$
\exists \mbox{ a generic constant } C > 0 \mbox{ such that } A < C B \quad \Leftrightarrow \quad A \lesssim B,
$$
and
$$
\exists \mbox{ a generic constant } C > 0 \mbox{ such that } A > C B \quad \Leftrightarrow \quad A \gtrsim B.
$$

\section{Governing Equations}

In this section, we shall introduce a couple of useful notation and present our governing equations of interest.

We assume that $\Omega$ is a convex polygonal domain in $\Reals{2}$ and it is decomposed into the following form:
\begin{equation}
\Omega = \Omega_1 \cup \Gamma \cup \Omega_2,
\end{equation}
where $\Omega_1$ and $\Omega_2$ are subdomains of $\Omega$ with different elastic materials having distinct Lam\'{e} constants, and $\Gamma$ is the interface between these domains. For any bounded subdomain $D \subset \Omega$, its restriction onto $\Omega_1$ and $\Omega_2$, are denoted by $D_1$ and $D_2$, respectively, i.e., $D_1 := D \cap \Omega_1$ and $D_2 := D \cap \Omega_2$.

We shall assume that $\Gamma$ is $C^2$ interface. Under this setting, the boundary of $\Omega$, denoted by $\partial \Omega$ is given as follows:
\begin{equation}
\partial \Omega = \partial \Omega_1 \cup \partial \Gamma \cup \partial \Omega_2,
\end{equation}
where $\partial \Omega_1, \partial \Gamma$ and $\partial \Omega_2$ are boundaries of $\Omega_1, \Gamma$ and $\Omega_2$, respectively. Let $v$ be a function defined on $\Omega$.  

We shall use standard function spaces. For a given subdomain $D \subset \Omega$, $C^m(D)$ denotes the space of the first m-derivatives are continuous in $D$, $H^m(D)$, $H^1_0(D)$, $H^m(\partial D)$ are the ordinary Sobolev spaces of order $m$ with the norm $\norm{\cdot}_{m,D}$ and the semi-norm $|\cdot|_{m,D}$. For $m = 0$, $(\cdot,\cdot)_{0,D}$ denote a $L^2$-inner product on the domain $D$. In case $D = \Omega$, the norm $\norm{\cdot}_{m,\Omega}$ and the inner product $(\cdot,\cdot)_{m,\Omega}$ shall be denoted simply by $\norm{\cdot}_{m}$ and $(\cdot,\cdot)_{m,\Omega}$, respectively. For $m = 1,2$, we also introduce the broken Sobolev space $\wtilde{H}^m(D)$ defined as
\begin{eqnarray*}
\wtilde{H}^m (D) :=\{u\in H^{m-1}(D) ,\ | \, u|_{D_1} \in H^m(D_1) \mbox{ and } u|_{D_2} \in H^m(D_2) \},
\end{eqnarray*}
equipped with the norm:
\begin{eqnarray*}
\norm{u}^2_{\wtilde{m},D} := \norm{u}_{m-1, D}^2+ \norm{u}^2_{m, D_1} + \norm{u}^2_{m, D_2}.
\end{eqnarray*}

Let $\beta$ be the conductivity for a given domain $\Omega$, i.e., the ratio between the permeability and viscosity, which will be allowed to be discontinuous across the interface $\Gamma$. We shall assume that $\beta$ is bounded and uniformly positive in $\Omega$ with $\beta|_{\Omega_1} \in C^1(\Omega_1)$ and $\beta|_{\Omega_2} \in C^1(\Omega_2)$. We let $\underline{\beta}$ and $\overline{\beta}$ be the lower and upper bound of $\beta$, respectively, i.e.,
\begin{equation}
0 < \underline{\beta} < \beta < \overline{\beta}.
\end{equation}
We now introduce two additional Sobolev spaces for taking into account the boundary and interface conditions.Namely,
\begin{subeqnarray}
\wtilde{H}^1_0 (\Omega) &:=& \{u \in \wtilde{H}^1 (\Omega) \, | \, u=0 \, \, {\rm on} \, \, \partial \Omega \},  \\
\wtilde{H}^2_{\Gamma_\beta}(\Omega) &:=& \{v \in \widetilde{H}^{2}(\Omega)\, | \, \jump{\beta \nabla v} = 0 \mbox{ on } \Gamma\}.
\end{subeqnarray}

The second order elliptic interface model problem that we are aiming to solve is that, given ${\bf{f}} \in L^2(\Omega)$, find $p \in \widetilde{H}^1_0(\Omega)$ such that
\begin{subeqnarray}\label{governing}
{\rm div} \bu &=& \mathbf{f}, \quad \mbox{in } \Omega,     \label{governing_eq1} \\
\bu &=& -\beta \nabla p,  \quad \mbox{in } \Omega,           \label{governing_eq2} \\
\jump{p} &=& 0, \quad \mbox{ on } \Gamma,                     \label{governing_eq3} \\
\jump{\beta \nabla p} &=& 0, \quad \mbox{ on } \Gamma, \label{governing_eq4} \\
p &=& 0, \quad \mbox{ in } \partial \Omega,                      \label{governing_eq5}
\end{subeqnarray}
where $\jump{p}$ and $\jump{\beta\nabla p}$ denote the jump of the function $p$ and the jump of $(\beta \nabla p) \cdot \bn_\Gamma$ on $\Gamma$, respectively. Here $\bn_\Gamma$ is the normal to the interface $\Gamma$ (further discussion on this notation will be introduced below). 

We note that for the sake of simplicity, the problem \eqref{governing} imposes homogeneous boundary and interface conditions. However, non-homogeneous conditions can also be considered with a simple modification. The weak formulation of the model problem \eqref{governing_eq1} is given as follows: find $p \in \widetilde{H}^1_0(\Omega)$ such that
\begin{equation}\label{con_weak_form}
\int_{\Omega} \beta  \nabla p \cdot \nabla v\, {\rm d} \mathbf{x} =\int_{\Omega} f v\, {\rm d} \mathbf{x}
\end{equation}
for all $v\in H^1_0(\Omega)$. Note that while $\jump{p} = 0$ on $\Gamma$ and $p = 0$ on $\partial \Omega$ are essential conditions, $\jump{\beta \nabla p} = 0$ on $\Gamma$ is the natural boundary condition.

Finally, we state the following regularity theorem \cite{bramble1996finite,chen1998finite,rouitberg1969theorem} regarding the model problem \eqref{con_weak_form}.
\begin{proposition}\label{main:them}
Let $f \in L^2(\Omega)$. Then, there exist a unique solution $p \in \widetilde{H}^1_0(\Omega)$ of problem \eqref{governing_eq1} such that
\begin{equation}\label{C_R}
\norm{p}_{\wtilde{H}^2(\Omega)} \lesssim \norm{f}_{L^2(\Omega)}.
\end{equation}
\end{proposition}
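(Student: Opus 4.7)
The plan is to split the argument in two: first prove existence and uniqueness of a weak solution in $H^1_0(\Omega)$, then upgrade that solution to the piecewise $H^2$-regularity asserted in \eqref{C_R}.

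For existence and uniqueness I would apply the Lax--Milgram lemma to the bilinear form $a(p,v) := \int_\Omega \beta \nabla p \cdot \nabla v \, {\rm d}\mathbf{x}$ on $H^1_0(\Omega)$. Boundedness is immediate from $\beta \le \overline{\beta}$ and Cauchy--Schwarz, while coercivity follows from $\beta \ge \underline{\beta} > 0$ combined with Poincar\'e's inequality. The functional $v \mapsto (f,v)_{0,\Omega}$ is clearly continuous, so we obtain a unique $p \in H^1_0(\Omega)$ satisfying \eqref{con_weak_form}, together with the $H^1$-bound $\norm{p}_1 \lesssim \norm{f}_{0,\Omega}$. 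Testing \eqref{con_weak_form} against smooth functions supported in $\Omega_i$ separately and integrating by parts recovers $-\nabla \cdot (\beta \nabla p) = f$ in each $\Omega_i$, and then revisiting the global identity produces the natural transmission condition $\jump{\beta \nabla p} = 0$ on $\Gamma$; the condition $\jump{p} = 0$ is already built into the inclusion $p \in H^1_0(\Omega)$.

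For the piecewise $H^2$-regularity, the plan is a standard localization argument. Introduce a smooth partition of unity $\{\chi_0,\chi_1,\dots,\chi_N\}$ where $\chi_0$ is supported away from $\Gamma$ and each $\chi_j$, $j \ge 1$, is supported in a neighborhood $U_j$ of a point on $\Gamma$ small enough that $\Gamma \cap U_j$ can be straightened by a $C^2$ diffeomorphism $\Phi_j$. Away from $\Gamma$, $\chi_0 p$ solves an elliptic equation with $C^1$ coefficients on a convex polygon, so Grisvard's $H^2$-regularity theorem gives $\chi_0 p \in H^2(\Omega)$ with the corresponding bound. For each $j \ge 1$, after pulling back by $\Phi_j$, the localized piece $\chi_j p$ solves a transmission problem across the flat interface $\{x_2 = 0\}$ with $C^1$-coefficients on each side and transmission conditions $\jump{p} = 0$, $\jump{\tilde\beta \, \partial_{x_2} p} = 0$. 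Applying Nirenberg's tangential difference-quotient method controls all mixed and tangential second derivatives on each side, and the normal second derivative $\partial_{x_2}^2 p$ is then recovered on each side directly from the PDE. Summing the localized estimates and combining with the first step yields $\norm{p}_{\wtilde{H}^2(\Omega)} \lesssim \norm{f}_{0,\Omega}$.

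The main obstacle is the interface step. The difference-quotient argument must interact with the transmission conditions rather than with an ordinary boundary condition, and it is exactly the cross-interface compatibility $\jump{\beta \nabla p} = 0$ that kills the spurious interface term after integration by parts. A further subtlety is that, because $\Gamma$ is only $C^2$ and $\beta$ is only $C^1$ in each $\Omega_i$, the flattening produces coefficients of the same limited regularity, so tangential integrations by parts must be tracked carefully. Rather than reproducing the full argument, I would reduce the problem to the standard transmission form and cite \cite{chen1998finite,bramble1996finite,rouitberg1969theorem}, whose estimates are tailored precisely to this setting.
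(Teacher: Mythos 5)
The paper gives no proof of this proposition at all: it is stated as a known regularity result and supported only by the citations \cite{bramble1996finite,chen1998finite,rouitberg1969theorem}, which is exactly where your argument also terminates. Your sketch (Lax--Milgram for well-posedness, then localization, interface flattening, and tangential difference quotients for the piecewise $H^2$ bound) is the standard and correct route to the cited estimate, so it is consistent with, and more informative than, what the paper provides.
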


\subsection{Immersed finite element method for \eqref{con_weak_form}}

In this section, we review and discuss the classical immersed finite element method to handle the problem \eqref{con_weak_form}. Let $\mathcal{T}_h$ be a regular triangulation of $\Omega$. We note that the triangulation is provided in general for which nodes are not necessarily aligned with the interface $\Gamma$. Under this setting, there are two types of triangles in $\mathcal{T}_h$, i.e., an interface element $T$, which is characterized by the fact that it is cut by the interface $\Gamma$ and a non-interface element $T$ which is not. We shall denote $\mathcal{T}_{h,\Gamma} \subset \mathcal{T}_h$, by the set of interface triangles. We shall let $\mathcal{E}_h$ be the set of edges of $\mathcal{T}_h$. Note that $\mathcal{E}_h = \mathcal{E}_h^o \cup \mathcal{E}_h^\partial$, where $\mathcal{E}_h^o$ is the set of interior edges while $\mathcal{E}_h^\partial$ is the set of boundary edges.

The space $H^s(\mathcal{T}_h) (s \in \Reals{})$ is the set of element-wise $H^2$ functions on $\mathcal{T}_h$, and $L^2(\mathcal{T}_h)$ refers to the set of element-wise $L^2$ functions. Following \cite{arnold2002unified}, for any $e \in \mathcal{E}_h$, we denote by $|e|$ the length of the edges $e$. Now let $e \in \mathcal{E}_h^o$, $T^{+}$ and $T^{-}$ denote two neighboring elements such that  $e = \partial T^{+}\cap \partial T^{-}$. Let $\bn^{+}$ and $\bn^{-}$ be the outward normal unit vectors to $\partial T^+$ and $\partial T^-$, respectively. For any given function $\xi$ and vector function $\tend{\xi}$, defined on the triangulation $\mathcal{T}_h$, we denote $\xi^{\pm}$ and ${\bf{\xi}}^{\pm}$ by the restrictions of $\xi$ and ${\bf{\xi}}$ to $T^\pm$, respectively. We define the average $\av{\cdot}$ as follows: for $\zeta \in L^2(\mathcal{T}_h)$ and $\tend{\tau} \in L^2(\mathcal{T}_h)^d$,
\begin{equation}\label{av-w}
\av{\zeta} := \frac{1}{2} \left ( \zeta^+ + \zeta^- \right ) \quad \mbox{ and } \quad \av{\tend{\tau}} := \frac{1}{2} \left (\tend{\tau}^+ + \tend{\tau}^- \right ) \quad \mbox{on } e \in \mathcal{E}_h^o.  
\end{equation}
On the other hand, for $e \in \mathcal{E}_h^\partial$, we set $\av{\zeta} := \zeta$ and $\av{\tend{\tau}} := \tend{\tau}$. The jump across the interior edge will be defined as usual: 
\begin{equation}
\jump{\zeta} = \zeta^+\bn^+ + \zeta^-\bn^- \quad \mbox{ and } \quad \jump{\tend{\tau}} = \tend{\tau}^+\cdot \bn^+ + \tend{\tau}^-\cdot\bn^- \quad \mbox{on } e\in \mathcal{E}_h^o. 
\end{equation}
For $e\in \mathcal{E}_h^\partial$, we set $\jump{\zeta} = \zeta \bn$. For any given edge $e \in \mathcal{E}_h^o$, there are two choices of the normal $\bn$ to $e$ and it is useful to fix one of them, for example, for an appropriate definition of the flux. Such a fixed choice of the normal to $e$ will be denoted by $\bn_e.$   

For any $T \in \mathcal{T}_h$ and an inner product $(\cdot,\cdot)_{m,T}$, the computation can be done with the following decomposition:
\begin{equation}
(u,v)_{m,T} = (u,v)_{m,T^+} + (u,v)_{m,T^-}.
\end{equation}

We will let $\mathcal{P}^k(T)$ denote the space of polynomials of degrees less than or equal to $k$ for a given $T \in \mathcal{T}_h$. Similarly, we also let $\mathcal{P}^k(e)$ denote the space of polynomials of degree less than or equal to $k$ for a given $e \in \mathcal{E}_h$.
\begin{figure}[ht]
\centering
\includegraphics[height = 0.3\textwidth]{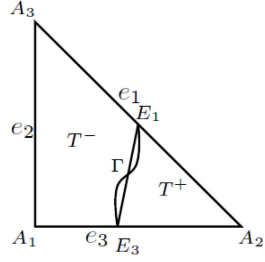}
\caption{An interface element $T$ cut by interface $\Gamma$.}
\end{figure}\label{fig:interel}
Given a non-interface element $T$, we shall recall that $S_h(T) = \mathcal{P}^1(T) = {\rm span} \{\lambda_j\}_{j=1,2,3}$, where $\lambda_j$'s are standard barycentric coordinates, i.e., $\lambda_j(A_i) = \delta_{ij}$, where $A_i$'s are nodes of $T$ and $\delta_{ij}$ is the standard Kronecker delta function. In case $T$ is an interface element, the space $S_h(T)$ will be modified as $\widehat{S}_h(T) = {\rm span} \{ \widehat{\lambda}_j \}_{j=1,2,3}$, so that $\widehat{\lambda}_j$ satisfies the interface condition as well as $\widehat{\lambda}_j(A_i) = \delta_{ij}$. More precisely, as described in Figure \ref{fig:interel}, given an interface element $T$, we suppose the interface $\Gamma$ cuts $T$ through edges $e_1$ and $e_3$ of $T$ at points $E_2$ and $E_1$. This cut will divide $T$ into  $T^+$ contained in $\Omega^+$ and $T^-$ contained in $\Omega^-$.

For $j=1,2,3$, we modify $\lambda_j \in S_h(T)$ to a piecewise linear function $\widehat{\lambda}_j$ of the following form:
\begin{align} \label{ifem_basis}
\what{\lambda}_j(X):=\left\{\begin{array}{c}
\what{\lambda}^+_j(X) = a_j^+ + b_j^+x+ c_j^+y,  \quad X = (x,y) \in T^+, \\
\what{\lambda}^-_j(X) = a_j^- + b_j^-x+ c_j^-y,  \quad X = (x,y) \in T^-,
\end{array}
\right.
\end{align}
where the coefficients $\{a_j^\pm, b_j^\pm, c_j^\pm \}$ are determined by the jump conditions and vertex degrees of freedom:
\begin{eqnarray}\label{intcond}
\what{\lambda}_j(X_i) = \delta_{ij}, \mbox{ and } \quad \jump{\what{\lambda}_j} = \jump{\beta \nabla \what{\lambda}_j} = 0 \mbox{ on } \Gamma.
\end{eqnarray}
It is well known that the aforementioned conditions (\ref{intcond}) uniquely determine $\what{\lambda}_j$ for $j=1,2,3$, (see \cite{chou2010optimal}). The vertex based piecewise linear immersed finite element space (IFEM) for the problem \eqref{con_weak_form} is then characterized by the space $\widehat{S}_h(\Omega)$ defined as follows: for $\phi \in \widehat{S}_h(\Omega)$, it holds
\begin{eqnarray}
\phi|_T \in S_h(T), && \hbox{if } T \in \mathcal{T}_h /\ \mathcal{T}_{h,\Gamma}, \\
\phi|_T \in \widehat{S}_h(T),  && \hbox{if }  T \in \mathcal{T}_{h,\Gamma}, \\
\phi|_{T_1}(X) = \phi|_{T_2}(X), && \hbox{if } X \in  T_1 \cap T_2, \\
\phi(X) = 0, && \hbox{if } X \in \partial \Omega.
\end{eqnarray}
The approximation property of the space $\widehat{S}_h(\Omega)$ is well known, \cite{he2008approximation,kwak2010analysis,li2004immersed}. Namely, let $\ell_h : H^2(T) \to \widehat{S}_h(T)$ be the standard local nodal interpolation operator defined by, for $v \in H^2(T)$,
\begin{equation}
\ell_h (v)(A_i) = v(A_i) \quad i = 1,2,3.
\end{equation}
We then let $I_h : \wtilde{H}^2(\Omega) \to \widehat{S}_h(\Omega)$ be the extension of $\ell_h$ defined by $I_h(v)|_T = \ell_h(v|_T)$. The following has been established:
\begin{lemma}\label{C_pi}
There exists a constant $C>0$ such that
\begin{align*}
\sum_{T \in \mathcal{T}_h} (\norm{\phi - I_h \phi}_{0,T}+h\norm{\phi - I_h \phi}_{1,T}) \leq C h^2 \norm{\phi}_{\wtilde{H}^2(\Omega)},
\end{align*}
for all $\phi \in \widetilde{H}^2_{\Gamma_\beta}(\Omega)$.
\end{lemma}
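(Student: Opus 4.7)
The plan is to estimate $\|\phi - I_h \phi\|_{0,T} + h\|\phi - I_h\phi\|_{1,T}$ element by element, splitting $\mathcal{T}_h$ into the non-interface triangles $\mathcal{T}_h \setminus \mathcal{T}_{h,\Gamma}$ and the interface triangles $\mathcal{T}_{h,\Gamma}$, and then summing. For a non-interface element $T$, the restriction $\phi|_T$ lies in $H^2(T)$ (since $T \subset \Omega_1$ or $T \subset \Omega_2$), and $I_h \phi|_T$ coincides with the ordinary $P_1$ Lagrange interpolant. The standard Bramble--Hilbert / Deny--Lions argument combined with an affine scaling to the reference triangle then gives the familiar bound $\|\phi - I_h\phi\|_{0,T} + h\|\phi - I_h\phi\|_{1,T} \lesssim h^2 |\phi|_{2,T}$, with the implicit constant depending only on the shape regularity of $\mathcal{T}_h$.

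The interesting case is an interface element $T \in \mathcal{T}_{h,\Gamma}$, on which $\phi$ is only piecewise $H^2$ (on $T^+$ and $T^-$) and satisfies the flux jump condition $\jump{\beta\nabla\phi}=0$ across $\Gamma \cap T$, while $I_h\phi \in \widehat{S}_h(T)$ is piecewise linear with coefficients dictated by the defining conditions \eqref{intcond}. The strategy here is to design a local auxiliary function $\widetilde\phi \in \widetilde{H}^2_{\Gamma_\beta}(T)$ that matches $\phi$ at the three vertices of $T$ and for which $\phi - \widetilde\phi$ can be controlled by $h^2\|\phi\|_{\widetilde{2},T}$ via a Sobolev extension of $\phi|_{T^\pm}$ from $T^\pm$ to the whole of $T$; because both $\phi$ and $\widetilde\phi$ satisfy the same flux jump condition, the difference can be analyzed on a reference interface element whose cut is one of only two topological configurations, and a Bramble--Hilbert argument in the broken space $\widetilde{H}^2_{\Gamma_\beta}(\widehat T)$ yields the $h^2$ rate.

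The main technical obstacle is that the scaling constants for the IFEM basis $\widehat\lambda_j$ and, consequently, for the nodal interpolation operator $\ell_h$ restricted to $\widehat{S}_h(T)$, depend on how the interface $\Gamma$ cuts $T$: if $\Gamma$ slices off a very thin sliver, the coefficients $a_j^\pm, b_j^\pm, c_j^\pm$ in \eqref{ifem_basis} could a priori blow up. Hence the crux of the proof is to establish a uniform bound
\begin{equation*}
\|\widehat\lambda_j\|_{L^\infty(T)} + h\,\bigl(\|\nabla\widehat\lambda_j\|_{L^\infty(T^+)} + \|\nabla\widehat\lambda_j\|_{L^\infty(T^-)}\bigr) \lesssim C(\underline{\beta},\overline{\beta}),
\end{equation*}
independently of the position of $\Gamma \cap T$ inside $T$. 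This is precisely the uniformity result proved in \cite{li2004immersed,chou2010optimal}, and once it is invoked, the standard affine scaling to a reference interface configuration applies and yields the local estimate $\|\phi - I_h\phi\|_{0,T} + h\|\phi - I_h\phi\|_{1,T} \lesssim h^2 \|\phi\|_{\widetilde{2},T}$.

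Squaring the local estimates, summing over $T \in \mathcal{T}_h$, using the Cauchy--Schwarz inequality to return to the $\ell^1$ sum in the statement, and noting that $\sum_T \|\phi\|_{\widetilde{2},T}^2 = \|\phi\|_{\widetilde{H}^2(\Omega)}^2$, yields the desired global bound. I expect the cleanest write-up to cite the uniform basis estimate from \cite{chou2010optimal,li2004immersed} as a black box and to focus the work on the auxiliary-function construction in the broken space $\widetilde{H}^2_{\Gamma_\beta}$.
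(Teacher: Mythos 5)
The paper does not actually prove Lemma~\ref{C_pi}: it is stated as a known result and attributed to \cite{he2008approximation,kwak2010analysis,li2004immersed}, so there is no in-paper argument to compare yours against. At the level of strategy your sketch reconstructs the route of those references---standard Lagrange interpolation on non-interface triangles, and on interface triangles a combination of (i) uniform bounds on the modified basis $\widehat\lambda_j$ independent of where $\Gamma$ cuts $T$ and (ii) Sobolev extensions of $\phi|_{T^\pm}$---rather than diverging from it.

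There is, however, one genuine gap in the sketch as written. You assert that ``both $\phi$ and $\widetilde\phi$ satisfy the same flux jump condition,'' but they do not: $\phi \in \widetilde{H}^2_{\Gamma_\beta}(\Omega)$ satisfies $\jump{\beta\nabla\phi}=0$ across the curved arc $\Gamma\cap T$, whereas the IFEM functions in $\widehat{S}_h(T)$ (and hence $I_h\phi$) satisfy the jump conditions across the straight chord $\overline{E_1E_2}$ by construction \eqref{ifem_basis}--\eqref{intcond}. The sliver between the arc and the chord is exactly where the two piecewise definitions disagree, and controlling the interpolation error there is an essential ingredient of the proofs in \cite{li2004immersed,chou2010optimal}; it is where the $C^2$ regularity of $\Gamma$ enters, since that sliver has area $O(h^3)$. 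Relatedly, a plain Bramble--Hilbert argument on a reference interface element is not available off the shelf: $\widehat{S}_h(T)$ does not contain all of $\mathcal{P}^1(T)$ (a non-constant globally linear function violates $\jump{\beta\nabla v}=0$ unless its gradient is tangential to the cut), and the cut configurations form a continuum rather than a finite set, so the uniformity of the constant must come from the explicit extension-operator computation rather than from compactness or polynomial reproduction. If you repair the chord-versus-arc issue and replace the reference-element Bramble--Hilbert step by the direct estimate with extensions, the argument closes and matches the cited literature.
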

The classic IFEM construct a finite element solution to the problem \eqref{con_weak_form}, by solving the following discrete weak formulation: find $p_h \in \widehat{S}_h(\Omega)$ such that
\begin{equation}\label{con_weak_form}
\int_{\Omega} \beta  \nabla p_h \cdot \nabla v_h\, \, d x = \int_{\Omega} f v_h \, dx, \quad \forall v_h \in \widehat{S}_h(\Omega).
\end{equation}
Recently, it is observed that the partially penalized IFEM is of optimal convergence in the energy norm \cite{Lin2015partially}, which will be adopted in our paper (see Section \ref{egifem} for detailed description). To clarify the proposed conservative IFEM formulation, we state the definition of local and global conservation.
\begin{definition}[Local Conservation]
Given a triangulation $\mathcal{T}_h$, we say that the discrete flux ${\bf{u}}_h$ is conservative if the following holds true:
\begin{equation}\label{cons}
\int_{\partial T} {\bf{u}}_h \cdot {\bf{n}} \, ds = \int_T f \, dx, \quad\forall T \in \mathcal{T}_h,
\end{equation}
where ${\bf{n}}$ is the unit outward normal vector to $\partial T$. The corresponding global conservation is with $T$ replaced by $\Omega$ in the equation, \eqref{cons}.
\end{definition}
We would like to remark that the conservation is dependent on the choice of triangulations, $\mathcal{T}_h$. The issue with the conservation has drawn a lot of attention in literatures (see \cite{sun2009locally,lee2015locally} and references cited therein). Due to the absence of the piecewise constant in the IFEM space, the standard and its variant IFEMs are not locally or globally conservative.

\section{Enriched immersed finite element method (EIFEM)}\label{egifem}

In this section, we introduce the enriched immersed finite element method by the piecewise constant functions, that induces the local and global conservation.

\subsection{Enriched immersed finite element and its approximation property}

We enrich the standard IFEM space $\widehat{S}(\Omega)$ by piecewise constant functions. The idea has been introduced at \cite{sun2009locally,lee2015locally}. However, both of works are limited to the case when the mesh fits in the interface. This is the first attempt to introduce the enrichment for the interface problem where the interface is not necessarily aligned with the mesh. We shall denote by $E_h$, the enriched IFEM space, i.e., we define
\begin{equation}
E_h(\Omega) := \widehat{S}_h(\Omega) + C_h(\Omega),
\end{equation}
where
\begin{align*}
C_h(\Omega):=\{\psi \in L^2(\Omega)  \, | \,  \psi|_T \in \mathcal{P}^0(T) \}.
\end{align*}
To clarify the EIFEM formulation, we shall set the fixed unit normal vector for any given edge $e \in \mathcal{E}_h$, which will be denoted by ${\bf{n}}$. Note that the dimension of the space $\widehat{S}_h(\Omega)$ is the number of interior nodes for the triangulation $\mathcal{T}_h$, which will be denoted by $N_{0}$ while the dimension of the space $C_h$ is the number of elements, which will be denoted by $N_{e}$. We first, consider the space $H_h(\Omega) := \widetilde{H}^1(\Omega) + E_h(\Omega)$ and equip it with a broken $H^1$-norm:
\begin{equation}
\tnorm{\phi}_{h} := \sum_{T\in\mathcal{T}_h } \norm{\phi}^2_{1,T} + \sum_{e\in\mathcal{E}_h} \frac{1}{|e|} \int_e \jump{\phi}^2 \, ds, \quad \forall \phi \in H_h(\Omega).
\end{equation}
We introduce the scalar $L^2$ projection, $Q_e^0: H^{1/2}(e) \mapsto \mathcal{P}^0(e)$ and the vector $L^2$ projection, ${\bf{Q}}_e^0 : (H^{1/2}(e))^2 \mapsto (\mathcal{P}^0(e))^2$, defined, respectively by
\begin{equation}
Q_e^0(v) = \frac{1}{|e|} \int_e \av{v} \, ds \quad \mbox{ and } \quad {\bf{Q}}_e^0({\bf{v}}) = \frac{1}{|e|} \int_e \av{{\tend{v}}}\, ds.
\end{equation}
We note that $Q_h^0 : L^2(\Omega) \mapsto C_h$ denotes the standard $L^2$ projection onto $C_h$. In this section, we shall establish the optimal approximation property for EIFEM, both in the primal and flux variables. We begin with the EIFEM interpolation. Motivated by Lee et. al \cite{lee2015locally}, we introduce the EIFEM-interpolation operator $\Pi_h : \widetilde{H}^2(\Omega) \mapsto E_h$ as follows:
\begin{equation}
\Pi_h v = I_h v + Q_h^0(v - I_hv).
\end{equation}
\begin{lemma}
There holds the following estimate:
\begin{align}\label{C_I}
\tnorm{\phi - \Pi_h \phi}_h \lesssim h \norm{\phi}_{\wtilde{H}^2(\Omega)},
\end{align}
for all $\phi \in \widetilde{H}^2_{\Gamma_\beta}$.
\end{lemma}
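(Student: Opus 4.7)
The plan is to decompose the interpolation error as
\[
\phi - \Pi_h \phi \;=\; (\phi - I_h \phi) \;-\; Q_h^0(\phi - I_h \phi),
\]
and control the two contributions that make up $\tnorm{\cdot}_h$ --- the elementwise broken $H^1$-norms and the weighted edge-jump terms --- separately, using Lemma \ref{C_pi} for the underlying IFEM nodal interpolation as the engine.

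For the volumetric part $\sum_T \norm{\phi - \Pi_h \phi}_{1,T}^2$, I would note that $Q_h^0(\phi - I_h\phi)$ is constant on each $T$, so its gradient vanishes and $|\phi - \Pi_h\phi|_{1,T} = |\phi - I_h\phi|_{1,T}$. For the $L^2$ piece the $L^2$-stability of $Q_h^0$ gives $\norm{\phi - \Pi_h\phi}_{0,T} \leq 2\norm{\phi - I_h\phi}_{0,T}$. Summing over $T$ and invoking Lemma \ref{C_pi} --- whose $\ell^1$-sum form in the excerpt implies the corresponding $\ell^2$-sum bound via $\sum a_T^2 \leq (\sum a_T)^2$ for nonnegative $a_T$ --- produces the desired $O(h^2)$ bound on this part.

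For the jump part $\sum_e |e|^{-1}\int_e \jump{\phi - \Pi_h\phi}^2\,ds$, two observations simplify matters drastically: (i) $\phi \in \widetilde{H}^1(\Omega) \subset H^1(\Omega)$, so $\jump{\phi}=0$ on every interior mesh edge in the trace sense; (ii) on non-interface edges $I_h\phi$ is continuous by construction. Hence on such edges the jump reduces to that of the piecewise-constant correction $Q_h^0(\phi - I_h\phi)$, which on an edge $e = \partial T^+ \cap \partial T^-$ equals the scalar $c^+ - c^-$ with $c^\pm := |T^\pm|^{-1}\int_{T^\pm}(\phi - I_h\phi)\,dx$. By Cauchy--Schwarz together with Lemma \ref{C_pi} and $|T|\simeq h^2$ one gets $|c^\pm| \leq |T^\pm|^{-1/2}\norm{\phi - I_h\phi}_{0,T^\pm} \lesssim h\,\norm{\phi}_{\widetilde{H}^2(T^\pm)}$. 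Since the jump is constant on $e$, the weight $|e|^{-1}$ cancels the length $|e|$ left from integration, so $e$ contributes $|c^+-c^-|^2 \lesssim h^2\bigl(\norm{\phi}_{\widetilde{H}^2(T^+)}^2+\norm{\phi}_{\widetilde{H}^2(T^-)}^2\bigr)$; summing with finite element overlap closes the estimate. Boundary edges are handled identically using the implicit zero boundary condition of the model problem, and on the few interface edges where $I_h\phi$ itself may jump this contribution is controlled at the same optimal $O(h^2)$ rate by the standard partially-penalized IFEM argument (cf.\ \cite{Lin2015partially}) and adds nothing new.

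The mild but genuine technical point --- and the place where I expect the main friction --- is that on interface elements $\phi - I_h\phi$ lies only in $\widetilde{H}^1(T)$ rather than $H^1(T)$, so the usual Poincar\'e--Wirtinger route of bounding $Q_h^0(\phi - I_h\phi)$ by $h$ times $|\phi - I_h\phi|_{1,T}$ is not directly available. My workaround is to route every estimate for the piecewise-constant correction through the elementwise $L^2$-norm $\norm{\phi - I_h\phi}_{0,T}$, for which Lemma \ref{C_pi} already supplies the sharp $O(h^2)$ bound uniformly on interface and non-interface elements, so that the broken regularity of $\phi - I_h\phi$ is never differentiated across $\Gamma$.
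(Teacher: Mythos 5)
Your proof is correct and follows essentially the same route as the paper: split $\phi-\Pi_h\phi=(I-Q_h^0)(\phi-I_h\phi)$, reduce everything to Lemma \ref{C_pi}, and control the piecewise-constant correction by elementary scaling (the paper phrases this as an inverse inequality plus the trace theorem for piecewise constants; you phrase it as the edge jump of $Q_h^0(\phi-I_h\phi)$ being a difference of element averages --- the same estimate). The one spot where you are looser than the paper is the interface edges, where you cite the partially penalized IFEM literature for $\jump{I_h\phi}$ instead of noting that the scaled trace inequality $\tfrac{1}{|e|}\norm{v}_{0,e}^2\lesssim h^{-2}\norm{v}_{0,T}^2+|v|_{1,T}^2$ applies to $(\phi-I_h\phi)|_{T^\pm}\in H^1(T^\pm)$ on \emph{every} edge and, combined with Lemma \ref{C_pi}, closes that case with no outside input.
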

\begin{proof}
We recall that
\begin{equation*}
\tnorm{\phi - I_h \phi}_h^2 = \sum_{T\in\mathcal{T}_h} \norm{\nabla (\phi-I_h \phi)}_{0,T}^2 + \sum_{e\in\mathcal{E}_h} \frac{1}{|e|} \int_e \jump{\phi - I_h \phi}^2
\, ds.
\end{equation*}
Since the estimate of the first term has been shown in Lemma \ref{C_pi}, we only need to investigate the second term. Invoking the standard trace inequality for $H^1$ function, we see that for $e \in \mathcal{E}_h$, let $T^+$ and $T^-$ be elements sharing $e$,
\begin{subeqnarray*}
\frac{1}{|e|} \int_e \jump{\phi - I_h \phi}^2_e \, ds &\lesssim& \frac{1}{|e|} \left( \int_e \left ( (\phi-I_h \phi)_{|_{T^+}} \right )^2  {\rm d} s + \int_e \left ( (\phi - I_h \phi)_{|_{T^-}}\right )^2  \, ds \right) \\
&\lesssim& \left( \frac{1}{h^2}\norm{\phi-I_h \phi}_{0,T^+ \cup T^-}^2 + |\phi-I_h \phi|_{1,T^+ \cup T^-}^2  \right).
\end{subeqnarray*}
Finally, for the remainder part, we first apply the standard inverse inequality on $T$ and trace theorem for the piecewise constant function, and then use the approximation property of $I_h$ to arrive at the conclusion. This completes the proof.
\end{proof}

\subsection{EIFEM formulation and its well-posedness}

We define the bilinear form $a_h(\cdot,\cdot) : H_h(\Omega) \times H_h(\Omega) \mapsto \Reals{}$ by, for all $v, w \in H_h(\Omega)$,
\begin{eqnarray*}
a_h(v, w) &=& \sum_{T \in \mathcal{T}_h} \int_T \beta \nabla v \cdot \nabla w \, dx - \sum_{e \in \mathcal{E}_h} \int_e \av{\beta \nabla v} \jump{w} \, d s \nonumber \\
&& + \theta \sum_{e \in \mathcal{E}_h^o} \int_e \av{\beta \nabla w} \jump{v} \, ds + \theta \sum_{e \in \mathcal{E}_h^\partial} \int_e \av{\beta \nabla w} \jump{Q_e^0(v)} \, ds \\
&& + \sum_{e \in \mathcal{E}_h^o} \frac{1}{|e|} \int_e \sigma (\beta) \jump{v}\jump{w} \, ds + \sum_{e \in \mathcal{E}_h^\partial}
\frac{1}{|e|} \int_e \sigma (\beta) \jump{Q_e^0(v)}\jump{Q_e^0(w)} \, ds, \label{bilinear_form}
\end{eqnarray*}
where $|e|$ is the measure of $e$, the symbol $\theta$ will be discussed below, and the symbol $\sigma(\beta)$ is to indicate that $\sigma$, the stabilization parameter is chosen depending on $\beta$ in each edge $e \in \mathcal{E}_h$. Theoretically, at the interface edge, $\sigma = \kappa \overline{\beta}/\underline{\beta}$ for some $\kappa > 0$ gives sufficient stabilization.

We are now in a position to state the enriched immersed finite element to solve \eqref{governing}. The enriched IFEM can then be formulated as follows: find $p_h \in E_h$ such that
\begin{equation}\label{weak_problem}
a_h(p_h, w_h) = (f, w_h)_{0}, \quad \forall w_h \in E_h.
\end{equation}
The bilinear form, here is the one that corresponds to interior penalty DG method, introduced in \cite{wheeler1978elliptic}. The symbol $\theta$ is the tuning parameter, which determines the type of interior penalty method, i.e., $\theta = -1, 0$ and $1$ result in NIPG, IIPG and SIPG, respectively \cite{sun2009locally,lee2015locally}.
\begin{remark}\label{remsingular}
The Dirichlet boundary condition has been imposed strongly for the space $\widehat{S}_h(\Omega)$. As such, the weak formulation \eqref{weak_problem} still results in weakly imposed zero boundary condition for the space $C_h$.
\end{remark}
We shall now list couple of simple but important observations for the problem \eqref{weak_problem}. We begin with the consistency.
\begin{lemma}
Suppose $p$ is the solution of (\ref{governing}) and $p_h$ is the solution of (\ref{weak_problem})
Then, we have
\begin{equation}\label{consis}
a_h(p-p_h, w_h)=0, \quad \forall w_h \in E_h.
\end{equation}
\end{lemma}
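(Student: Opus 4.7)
The plan is to establish \eqref{consis} via the standard Galerkin orthogonality argument: show that the exact solution $p$ itself satisfies $a_h(p, w_h) = (f, w_h)_0$ for every $w_h \in E_h$, and then subtract the discrete identity \eqref{weak_problem}.

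First I would simplify $a_h(p, w_h)$ by exploiting the regularity $p \in \wtilde{H}^1_0(\Omega)$. Since $p$ is globally $H^1$, $\jump{p} = 0$ on every interior edge; since $p = 0$ on $\partial\Omega$, the edge projection $Q_e^0(p) = 0$ on every boundary edge. These two observations immediately eliminate both $\theta$-consistency contributions and both penalty contributions from $a_h(p, w_h)$, reducing it to the volume integral $\sum_T \int_T \beta \nabla p \cdot \nabla w_h \, dx$ together with the flux-consistency edge term $-\sum_{e \in \mathcal{E}_h} \int_e \av{\beta \nabla p} \cdot \jump{w_h}\, ds$.

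Next I would integrate by parts element by element on the volume sum. On a non-interface element this is routine since $p|_T \in H^2(T)$ and $\beta|_T \in C^1$. On an interface element $T$ I would split $T = T^+ \cup T^-$ along $\Gamma \cap T$ and integrate by parts on each subelement; the two contributions over $\Gamma \cap T$ combine into integrals involving the normal flux jump $\jump{\beta \nabla p}$ across $\Gamma$ and the jump of $w_h$ across $\Gamma$, both of which vanish---the former by the interface condition \eqref{governing_eq4}, and the latter because every $w_h \in E_h$ is continuous across $\Gamma$ inside the element (the IFEM basis $\what{\lambda}_j$ satisfies $\jump{\what{\lambda}_j} = 0$ on $\Gamma$ by construction, and the piecewise constant enrichment is element-wise constant). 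Summing over $T$ yields $\int_\Omega f w_h\,dx + \sum_T \int_{\partial T}(\beta \nabla p) \cdot \bn_T \, w_h\, ds$.

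Finally I would reassemble the element-boundary sum into edge integrals. Because $-\mydiv(\beta \nabla p) = f \in L^2(\Omega)$, the vector field $\beta \nabla p$ lies in $H(\mydiv;\Omega)$, so its normal component has no jump across any interior mesh edge. Applying the DG identity $\jump{v\tend{\tau}} = \av{v}\jump{\tend{\tau}} + \av{\tend{\tau}} \cdot \jump{v}$, together with the boundary-edge conventions $\av{\cdot} = \cdot$ and $\jump{v} = v\bn$, collapses the element-boundary sum to $\sum_{e \in \mathcal{E}_h} \int_e \av{\beta \nabla p} \cdot \jump{w_h}\, ds$, which exactly cancels the remaining edge term in $a_h(p, w_h)$. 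Thus $a_h(p, w_h) = (f, w_h)_0$, and subtracting \eqref{weak_problem} gives \eqref{consis}. The step requiring the most care is the interior interface cut inside an element: one must simultaneously invoke continuity of the exact normal flux $\beta \nabla p \cdot \bn_\Gamma$ and the built-in continuity of every $E_h$-function across $\Gamma$ within the element, so that the element-wise integration by parts introduces no spurious interface residual despite the fact that the mesh does not conform to $\Gamma$.
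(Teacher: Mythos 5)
Your proposal is correct: it carries out, in full detail, the standard consistency argument (eliminate the jump and penalty terms using $\jump{p}=0$ and $p|_{\partial\Omega}=0$, integrate by parts element-wise using $\jump{\beta\nabla p}=0$ across $\Gamma$ and the continuity of $E_h$-functions within interface elements, and reassemble via the DG identity), which is exactly what the paper's one-line proof ("this follows from the definition of $a_h$") implicitly relies on. So you take essentially the same approach, just with the details the paper omits actually written out.
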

\begin{proof}
This follows from the definition of the $a_h(\cdot,\cdot)$ form. This completes the proof.
\end{proof}
For the coercivity of the bilinear form $a_h(\cdot,\cdot)$, we state and prove a simple, but important lemma:
\begin{lemma}\label{trace_like}
The following holds for all $\phi \in E_h(\Omega)$ and $T\in\mathcal{T}_h$ and edges $e$ of $T$.
\begin{align*}
\norm{\beta \nabla \phi \cdot  \bn_e}_{0,e}^2 \leq C_th^{-1} \norm{\beta \nabla \phi}_{0,T}^2.
\end{align*}
\end{lemma}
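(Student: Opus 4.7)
The plan is first to reduce to the case $\phi \in \widehat{S}_h(\Omega)$. Since every $\phi \in E_h(\Omega)$ admits a decomposition $\phi = \widehat{\phi} + c$ with $\widehat{\phi} \in \widehat{S}_h(\Omega)$ and $c \in C_h(\Omega)$, and since $\nabla c \equiv 0$ on each element, $\beta \nabla \phi = \beta \nabla \widehat{\phi}$, so it suffices to establish the estimate for $\widehat{\phi} \in \widehat{S}_h(\Omega)$. For a non-interface element $T \in \mathcal{T}_h \setminus \mathcal{T}_{h,\Gamma}$, $\widehat{\phi}|_T \in \mathcal{P}^1(T)$ and $\beta|_T$ is constant, so $\beta \nabla \widehat{\phi} \cdot \bn_e$ is constant on $e$ and the inequality collapses to the shape-regularity ratio $|e|/|T| \lesssim h^{-1}$.

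The substantive case is an interface element $T \in \mathcal{T}_{h,\Gamma}$. Here $\beta \nabla \widehat{\phi}$ is piecewise constant with values $\bw^\pm := (\beta \nabla \widehat{\phi})|_{T^\pm}$ on the two subregions. Splitting $e = e^+ \cup e^-$ with $e^\pm := e \cap \overline{T^\pm}$, I write
\begin{equation*}
\norm{\beta \nabla \widehat{\phi} \cdot \bn_e}_{0,e}^2 \leq |\bw^+|^2 |e^+| + |\bw^-|^2 |e^-| \lesssim \left( |\bw^+|^2 + |\bw^-|^2 \right) h,
\end{equation*}
while $\norm{\beta \nabla \widehat{\phi}}_{0,T}^2 = |\bw^+|^2 |T^+| + |\bw^-|^2 |T^-|$. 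A naive term-by-term comparison fails because either of $|T^\pm|$ can be arbitrarily small relative to $|T|$, depending on how $\Gamma$ cuts the element.

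This degeneracy is the main technical obstacle, and I resolve it by exploiting the interface conditions baked into $\widehat{S}_h(T)$. Continuity of $\widehat{\phi}$ across $\Gamma$ forces the tangential derivative to agree, i.e., $\bw^+ \cdot \btau_\Gamma / \beta^+ = \bw^- \cdot \btau_\Gamma / \beta^-$, while $\jump{\beta \nabla \widehat{\phi}} = 0$ yields $\bw^+ \cdot \bn_\Gamma = \bw^- \cdot \bn_\Gamma$. Decomposing each $\bw^\pm$ along the orthonormal pair $\{\bn_\Gamma, \btau_\Gamma\}$ and combining these two identities gives $|\bw^+| \sim |\bw^-|$ with equivalence constants depending only on $\overline{\beta}/\underline{\beta}$. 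Since at least one of $|T^\pm|$ exceeds $|T|/2 \gtrsim h^2$, together with $|\bw^+| \sim |\bw^-|$ this yields $\left( |\bw^+|^2 + |\bw^-|^2 \right) h^2 \lesssim |\bw^+|^2 |T^+| + |\bw^-|^2 |T^-|$, from which the claimed bound follows with a constant $C_t$ independent of the interface location, depending only on shape-regularity and the coefficient contrast.
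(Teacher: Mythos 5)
Your proof is correct, and it rests on the same two facts the paper uses: the decomposition of the flux along the interface normal and tangent, and the two jump conditions built into $\widehat{S}_h(T)$ that make each component controllable across the cut. The packaging, however, is different enough to be worth noting. The paper argues that $\beta(\nabla\phi\cdot\bn_\Gamma)\bn_\Gamma$ and $(\nabla\phi\cdot\bt_\Gamma)\bt_\Gamma$ each lie in $H^1(T)$ and then invokes a scaled trace bound $\norm{\cdot}_{0,e}\leq Ch^{-1/2}\norm{\cdot}_{0,T}$; as literally stated for $H^1$ functions that inequality needs an extra $h|\cdot|_{1,T}^2$ term, and it really holds here only because those components are (piecewise constant and continuous, hence) constant on $T$ --- a point the paper leaves implicit. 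You instead work directly with the constant vectors $\bw^{\pm}$, derive the equivalence $|\bw^+|\sim|\bw^-|$ with constants depending only on $\overline{\beta}/\underline{\beta}$ from the tangential-continuity and flux-continuity identities, and close the argument with the observation that at least one of $|T^{\pm}|$ is at least $|T|/2$. This makes fully explicit the one genuinely delicate issue --- that the constant $C_t$ cannot degenerate when $\Gamma$ cuts off a sliver --- which the paper's trace-inequality phrasing obscures. The only cosmetic caveats are that $\beta$ is only assumed $C^1$ on each subdomain rather than piecewise constant, and that $T^{\pm}$ in the IFEM construction are delimited by the chord $\overline{E_1E_2}$ rather than by $\Gamma$ itself; neither affects your estimates, since you only use the uniform bounds $\underline{\beta}\leq\beta\leq\overline{\beta}$ and the identity $|e^+|+|e^-|=|e|$.
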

\begin{proof}
We decompose $\nabla \phi$ as
\begin{align*}
\nabla \phi
= (\nabla \phi \cdot \bn_\Gamma) \bn_\Gamma
+ (\nabla \phi \cdot \bt_\Gamma) \bt_\Gamma
:= \bw + \bz,
\end{align*}
where $\bn_\Gamma$ and $\bt_\Gamma$ are the unit normal and tangent vector to the interface $\Gamma$.
Since the functions in $\what{S}_h(T)$ satisfies the flux continuity condition, $\beta \bw \in H^1(T)$.
Also, $\nabla \phi $ has well defined trace on $\Gamma$, $\bz$ is in $H^1(T)$.
Thus, we have that
\begin{align}
\norm{\beta \bw \cdot \bn }_{0,e} &\leq Ch^{-1/2}  \norm{\beta \bw}_{0,T} \label{temp1000} \\
\norm{\bz  \cdot \bn }_{0,e} &\leq Ch^{-1/2}  \norm{\bz}_{0,T}. \label{temp1001}
\end{align}
By the triangular inequality and inequalities (\ref{temp1000}) and (\ref{temp1001}), we have
\begin{align*}
\norm{\beta \nabla \phi \cdot  \bn_e}_{0,e}
&\leq  \norm{\beta \bw \cdot \bn_e}_{0,e}
+ \norm{\beta \bz \cdot \bn_e}_{0,e} \\
&\leq  \norm{\beta \bw \cdot \bn_e}_{0,e}
+  \overline{\beta} \norm{\bz \cdot \bn_e}_{0,e} \\
&\leq Ch^{-1/2} ( \norm{\beta \bw}_{0,T}
+ \overline{\beta} \norm{\bz}_{0,T} ) \\
&\leq Ch^{-1/2} \left(1 + \frac{\overline{\beta}}{\underline{\beta}} \right)\norm{\beta \nabla \phi}_{0,T}. \quad \Box
\end{align*}
\end{proof}

We are in a position to establish the coercivity of the bilinear form $a_h(\cdot,\cdot)$.
\begin{lemma}\label{lemma:corc}
There exists some $\sigma_0>0$ such that the following holds whenever $\sigma>\sigma_0$,
\begin{align}\label{corc}
C_\alpha \tnorm{\phi_h}_h^2 \leq a_h(\phi_h,\phi_h), \quad  \forall \phi_h \in E_h(\Omega) ,
\end{align}
for some $\alpha>0$.
\end{lemma}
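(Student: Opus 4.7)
The plan is to substitute $v = w = \phi_h$ into the definition of $a_h$, simplify using the structure of $E_h(\Omega)$, and then control the resulting cross term by combining Cauchy--Schwarz, Lemma \ref{trace_like}, and Young's inequality. The key preliminary observation is that for any $\phi_h \in E_h(\Omega) = \widehat{S}_h(\Omega) + C_h(\Omega)$, one has $\jump{Q_e^0(\phi_h)} = \jump{\phi_h}$ on every edge: the $\widehat{S}_h$-component is continuous across interior edges and vanishes strongly on $\partial\Omega$, while the $C_h$-component is piecewise constant, so its edgewise average agrees with its trace. Consequently the two $\theta$-sums in $a_h$ merge into $\theta\sum_{e\in\mathcal{E}_h}\int_e \av{\beta\nabla\phi_h}\cdot\jump{\phi_h}\,ds$, which combines with the consistency sum to yield
\[
a_h(\phi_h,\phi_h) = \sum_{T \in \mathcal{T}_h}\int_T \beta |\nabla \phi_h|^2\,dx + (\theta - 1)\sum_{e \in \mathcal{E}_h}\int_e \av{\beta\nabla\phi_h} \cdot \jump{\phi_h}\,ds + \sum_{e \in \mathcal{E}_h}\frac{\sigma(\beta)}{|e|}\int_e \jump{\phi_h}^2\,ds.
\]

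Next, for the middle sum I would apply edgewise Cauchy--Schwarz and Lemma \ref{trace_like}, together with $\|\beta\nabla\phi_h\|_{0,T}^2 \le \overline{\beta}\int_T \beta|\nabla\phi_h|^2\,dx$, to obtain
\[
\left| \int_e \av{\beta\nabla\phi_h}\cdot\jump{\phi_h}\,ds \right| \le \sqrt{C_t\,\overline{\beta}}\, h^{-1/2}\Bigl(\int_{T^+ \cup T^-}\!\!\beta|\nabla\phi_h|^2\Bigr)^{1/2}\|\jump{\phi_h}\|_{0,e}.
\]
Young's inequality with a small parameter $\delta > 0$ then splits this into a diffusion piece bounded by $\tfrac{\delta}{2} C_t\overline{\beta}\int_{T^+\cup T^-}\beta|\nabla\phi_h|^2$ and a jump piece bounded by $\tfrac{1}{2\delta}\tfrac{1}{|e|}\|\jump{\phi_h}\|_{0,e}^2$, using the shape-regular equivalence $|e|\sim h$. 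Summing over edges with the bounded edge valence of elements, and then choosing $\delta$ of order $1/(|\theta-1|C_t\overline{\beta})$, ensures that the total diffusion absorption still leaves a positive residual coefficient in front of $\sum_T\int_T \beta|\nabla\phi_h|^2$.

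Setting $\sigma_0$ (necessarily depending on $C_t$, $\theta$, and the ratio $\overline{\beta}/\underline{\beta}$ at interface edges, as anticipated in the text), the penalty coefficient remains strictly positive for all $\sigma(\beta) > \sigma_0$, giving
\[
a_h(\phi_h,\phi_h) \gtrsim \sum_{T \in \mathcal{T}_h}\|\nabla\phi_h\|_{0,T}^2 + \sum_{e \in \mathcal{E}_h}\frac{1}{|e|}\|\jump{\phi_h}\|_{0,e}^2.
\]
This is the $H^1$-seminorm part of $\tnorm{\cdot}_h$; the $L^2$ contribution, needed to complete the full norm bound, is recovered via a discrete Poincar\'e--Friedrichs inequality for broken $H^1$ spaces, which applies because the $\widehat{S}_h$-part of $\phi_h$ vanishes strongly on $\partial\Omega$ and the boundary jumps of the $C_h$-part are penalized (cf.~Remark \ref{remsingular}), so no null mode survives. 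The main obstacle I anticipate is verifying uniform behaviour of the constant $C_t$ from Lemma \ref{trace_like} across both interface and non-interface elements, which is precisely the point where the $\bw$--$\bz$ decomposition used there is essential, and which is also what forces the $\overline{\beta}/\underline{\beta}$-dependence of $\sigma_0$.
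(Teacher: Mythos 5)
Your proposal is correct and follows essentially the same route as the paper: bound the $(\theta-1)$ cross term via edgewise Cauchy--Schwarz, the trace-type estimate of Lemma \ref{trace_like}, and Young's inequality, then absorb into the diffusion term and take $\sigma$ large. You are in fact slightly more careful than the paper on two points it leaves implicit --- the identification $\jump{Q_e^0(\phi_h)}=\jump{\phi_h}$ on boundary edges (which justifies merging the two $\theta$-sums) and the discrete Poincar\'e--Friedrichs step needed to pass from the broken $H^1$-seminorm plus jumps to the full norm $\tnorm{\cdot}_h$.
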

\begin{proof}
Using the Cauchy's inequality, we have that
\begin{eqnarray*}
&& \sum_{e \in \mathcal{E}_h} \int_e| \av{\beta \nabla \phi_h \cdot \bn_e} \jump{\phi_h}| \, ds  \\
&& \qquad \leq \left( h \sum_{e \in \mathcal{E}_h}  \norm{ \av{\beta \nabla \phi_h \cdot \bn_e}}_{0,e}^2  \right)^{\frac{1}{2}}
\left(  h^{-1} \sum_{e \in \mathcal{E}_h} \norm{ \jump{\phi_h}}_{0,e}^2  \right)^{\frac{1}{2}}.
\end{eqnarray*}
Let $T^+$ and $T^-$ be neighboring elements of the edge $e$. By applying the Lemma \ref{trace_like} and using the fact that there are at most finite number of neighboring elements for any given element of the mesh, we have that
\begin{subeqnarray*}
h \sum_{e \in \mathcal{E}_h}  \norm{\av{\beta \nabla \phi_h \cdot \bn_e}}_{0,e}^2 &\lesssim& h \sum_{e \in \mathcal{E}_h} \left( \norm{(\beta \nabla \phi_h)_{|_{T^+}} \cdot \bn_e}_{0,e}^2  + \norm{(\beta \nabla \phi_h)_{|_{T^-}} \cdot \bn_e}_{0,e}^2 \right)   \\
&\lesssim& \sum_{e \in \mathcal{E}_h}  \norm{\beta \nabla \phi_h}_{0,T^+ \cup T^-}^2 \lesssim \overline{\beta} \sum_{T \in \mathcal{T}_h}  \norm{\nabla \phi_h}_{0,T}^2.
\end{subeqnarray*}
Invoking Young's inequality, for $\delta > 0$, we have that
\begin{subeqnarray*}
&&(1 - \theta) \sum_{e \in \mathcal{E}_h} \int_e \left | \av{\beta \nabla \phi_h \cdot \bn_e} \jump{\phi_h} \right | ds \\
&& \qquad \lesssim \frac{\delta}{2} \sum_{T \in \mathcal{T}_h}  \norm{\nabla \phi_h}_{0,T}^2 + \frac{(1-\theta)^2 \overline{\beta}  }{2\delta} \sum_{e \in \mathcal{E}_h} \frac{1}{|e|} \norm{\jump{\phi_h}}_{0,e}^2.
\end{subeqnarray*}
Thus, we have
\begin{subeqnarray*}
&& a_h(\phi_h,\phi_h) = \sum_{T \in\mathcal{T}_h} \int_T \beta \nabla \phi_h \cdot \nabla \phi_h \, dx \\
&&\quad -(1 - \theta) \sum_{e \in \mathcal{E}_h} \int_e \av{\beta \nabla \phi_h \cdot \bn_e} \jump{\phi_h} \,ds
+ \sum_{e\in \mathcal{E}_h} \frac{1}{|e|} \int_e \sigma(\beta) \jump{\phi_h}^2 \, ds \\
&&\quad \gtrsim \left(\underline{\beta}-\frac{\delta}{2} \right)\sum_{T\in\mathcal{T}_h} \norm{\nabla \phi_h}_{0,T}^2
+ \left( C \min_{e \in \mathcal{E}_h} \sigma(\beta) - \frac{(1-\theta)^2 \overline{\beta}}{2\delta} \right) \sum_{e \in \mathcal{E}_h} \frac{1}{|e|}  \norm{ \jump{\phi_h}}_{0,e}^2,
\end{subeqnarray*}
for some generic constant $C > 0$. By choosing $\delta = \underline{\beta}$ and $\min_{e \in \mathcal{E}_h} \sigma(\beta)$ large enough, we obtain the desired result. This completes the proof.
\end{proof}

The continuity of the bilinear form $a_h(\cdot,\cdot)$ can be proven by the same techniques used in the proof of Lemma \ref{lemma:corc}.
\begin{lemma}
There exists some $C_b$ such that the following holds when $\sigma>0$,
\begin{align}\label{continuity}
a_h(\phi_h,\psi_h) \leq C_b \tnorm{\phi_h}_h  \tnorm{\psi_h}_h  , \quad  \forall \phi_h, \psi_h  \in E_h(\Omega).
\end{align}
\end{lemma}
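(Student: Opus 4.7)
The plan is to bound each of the six terms composing $a_h(\phi_h,\psi_h)$ by a constant multiple of $\tnorm{\phi_h}_h \tnorm{\psi_h}_h$ via Cauchy--Schwarz, and then sum. The key technical tool is Lemma \ref{trace_like}, which is exactly the ingredient that made the coercivity proof work, combined with the $L^2$-stability of the edge projection $Q_e^0$ on boundary edges.

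First, I would dispense with the easy terms. For the volume term, Cauchy--Schwarz on each element followed by the Cauchy--Schwarz for sums gives
\[
\sum_{T \in \mathcal{T}_h} \int_T \beta \nabla \phi_h \cdot \nabla \psi_h \, dx \;\leq\; \overline{\beta}\, \tnorm{\phi_h}_h \tnorm{\psi_h}_h.
\]
For the two stabilization terms, Cauchy--Schwarz on $e$ followed by summation over $e$ bounds them by $\max_e \sigma(\beta)\, \tnorm{\phi_h}_h \tnorm{\psi_h}_h$, using for the boundary piece the $L^2(e)$-stability $\norm{Q_e^0 v}_{0,e}\leq \norm{v}_{0,e}$.

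The main work is with the four flux--jump terms. I would treat the interior edges first. Splitting Cauchy--Schwarz as in the coercivity proof,
\[
\Big| \sum_{e \in \mathcal{E}_h^o} \int_e \av{\beta \nabla \phi_h \cdot \bn_e} \jump{\psi_h} \, ds \Big|
\;\leq\;
\Big( h \sum_{e} \norm{\av{\beta \nabla \phi_h \cdot \bn_e}}_{0,e}^2 \Big)^{1/2}
\Big( h^{-1}\sum_{e} \norm{\jump{\psi_h}}_{0,e}^2 \Big)^{1/2}.
\]
For the first factor I would apply Lemma \ref{trace_like} on each of the two elements sharing $e$ and use the bounded overlap of neighboring elements exactly as in the proof of Lemma \ref{lemma:corc}, giving a bound by $C \overline{\beta} \sum_T \norm{\nabla \phi_h}_{0,T}^2 \lesssim \tnorm{\phi_h}_h^2$. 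The second factor is controlled by $\tnorm{\psi_h}_h^2$ up to $|e| \sim h$. The symmetric term with $\phi_h$ and $\psi_h$ swapped (with factor $\theta$) is handled identically.

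For the boundary flux terms, the same estimate applies since $Q_e^0$ is $L^2(e)$-stable, so $\norm{\jump{Q_e^0(\psi_h)}}_{0,e} \leq \norm{\jump{\psi_h}}_{0,e}$, and the argument reduces to the interior case. The only subtle point, which is what I expect to be the main obstacle to state cleanly, is making sure Lemma \ref{trace_like} is legitimately applicable to arguments in $E_h(\Omega)=\widehat{S}_h(\Omega)+C_h(\Omega)$: on a piecewise constant component $\nabla \phi_h \equiv 0$, so the lemma is trivially satisfied; on the IFEM component the lemma applies directly. Collecting the six bounds yields the continuity constant
\[
C_b \;\lesssim\; \overline{\beta}\Big(1 + \tfrac{\overline{\beta}}{\underline{\beta}}\Big)(1+|\theta|) + \max_{e\in \mathcal{E}_h}\sigma(\beta),
\]
which completes the proof.
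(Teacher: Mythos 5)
Your proposal is correct and follows exactly the route the paper intends: the paper gives no written proof for this lemma, stating only that it ``can be proven by the same techniques used in the proof of Lemma \ref{lemma:corc}'', and your term-by-term Cauchy--Schwarz argument using Lemma \ref{trace_like} with the $h$/$h^{-1}$ weighting and the $L^2(e)$-stability of $Q_e^0$ is precisely that technique spelled out. Your remark that Lemma \ref{trace_like} already covers all of $E_h(\Omega)$ (trivially on the piecewise constant component) is a correct and worthwhile clarification.
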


We now state and prove the error estimate for the primary variable in $\tnorm{\cdot}_h$-norm.
\begin{theorem}
Let $p$ be the solution of \eqref{governing} and $p_h$ be the solution of (\ref{weak_problem}).
Then there exists some $C>0$ such that following holds.
\begin{align}\label{energy_norm}
\tnorm{p-p_h}_h \leq Ch \norm{f}_{L^2(\Omega)}.
\end{align}
\end{theorem}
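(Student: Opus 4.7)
The plan is a standard Strang/C\'ea type argument. First I would split the error into approximation and finite element parts via the EIFEM interpolation $\Pi_h$:
$$
p - p_h = (p - \Pi_h p) + (\Pi_h p - p_h) =: \eta + \xi_h,
$$
where $\xi_h \in E_h(\Omega)$. By the triangle inequality it suffices to bound $\tnorm{\eta}_h$ and $\tnorm{\xi_h}_h$ separately, and the former is already handled by the interpolation estimate \eqref{C_I} combined with the regularity bound \eqref{C_R}.

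For $\tnorm{\xi_h}_h$ the natural route is coercivity plus Galerkin orthogonality. Namely, by Lemma~\ref{lemma:corc},
$$
C_\alpha \tnorm{\xi_h}_h^2 \le a_h(\xi_h, \xi_h) = a_h(\Pi_h p - p, \xi_h) + a_h(p - p_h, \xi_h),
$$
and the second term vanishes by the consistency identity \eqref{consis} since $\xi_h \in E_h$. So everything reduces to controlling $|a_h(\eta, \xi_h)|$ in terms of $\tnorm{\xi_h}_h$ times an $O(h)\|f\|_{L^2}$ factor. This is where the main technical point lies: the continuity estimate \eqref{continuity} as stated is for pairs of functions in $E_h$, whereas $\eta = p - \Pi_h p$ is only in $H_h(\Omega)$. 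I would therefore argue term-by-term in the definition of $a_h$: the volume term is bounded by Cauchy--Schwarz; the jump/penalty terms are bounded using the interpolation estimate since $\jump{p} = 0$ across every interior edge (so $\jump{\eta} = -\jump{\Pi_h p - p}$ is controlled by \eqref{C_I}); and the flux-average terms $\int_e \av{\beta \nabla \eta}\jump{\xi_h}\,ds$ are handled by introducing a strengthened seminorm
$$
|\eta|_{*,h}^2 \;:=\; \sum_{e\in\mathcal{E}_h} h\,\|\av{\beta \nabla \eta\cdot\bn_e}\|_{0,e}^2,
$$
which is bounded by $h^2\|p\|_{\wtilde{H}^2(\Omega)}^2$ via a standard trace inequality on each subtriangle $T^\pm$ of an interface element (using $\jump{\beta \nabla p} = 0$ on $\Gamma$, which is exactly the content of $p \in \wtilde{H}^2_{\Gamma_\beta}(\Omega)$).

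Once the augmented continuity $|a_h(\eta,\xi_h)| \lesssim (\tnorm{\eta}_h + |\eta|_{*,h})\tnorm{\xi_h}_h$ is in hand, coercivity yields
$$
\tnorm{\xi_h}_h \;\lesssim\; \tnorm{\eta}_h + |\eta|_{*,h} \;\lesssim\; h\,\|p\|_{\wtilde{H}^2(\Omega)},
$$
and then \eqref{C_R} converts $\|p\|_{\wtilde{H}^2(\Omega)}$ into $\|f\|_{L^2(\Omega)}$, completing the proof via the triangle inequality. The main obstacle I expect is precisely verifying the bound on $|\eta|_{*,h}$ on interface elements: on a non-interface triangle the usual trace+interpolation estimate is immediate, but on an interface element one must decompose $\nabla(p - \Pi_h p)$ along the normal and tangent to $\Gamma$ (analogous to the decomposition used in Lemma~\ref{trace_like}) so that the flux continuity condition can be exploited to remain in $H^1$ on each subtriangle. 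Everything else is routine Cauchy--Schwarz, Young's inequality, and combining the constants.
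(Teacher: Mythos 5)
Your proposal is correct and follows essentially the same route as the paper: a triangle inequality about $\Pi_h p$, then coercivity (Lemma~\ref{lemma:corc}), the consistency identity \eqref{consis}, and continuity to obtain the quasi-optimality bound $\tnorm{p_h - \Pi_h p}_h \lesssim \tnorm{p - \Pi_h p}_h$, finished off by the interpolation estimate \eqref{C_I} and the regularity bound \eqref{C_R}. In fact you are more careful than the paper, which simply cites \eqref{continuity} and C\'ea's lemma even though \eqref{continuity} is stated only for pairs in $E_h(\Omega)$; your strengthened seminorm $|\cdot|_{*,h}$ controlling the edge averages of $\beta\nabla(p-\Pi_h p)$ is exactly the standard device needed to make that step rigorous for the non-discrete argument.
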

\begin{proof}
By triangular inequality, we have
\begin{align}\label{h1_1}
\tnorm{p-p_h}_h \leq \tnorm{p_h - \Pi_h p}_h + \tnorm{p - \Pi_h p }_h.
\end{align}
From inequalities (\ref{corc}), (\ref{continuity}) and Ce\'{a}'s Lemma, it follows that
\begin{align}\label{h1_2}
\tnorm{p_h - \Pi_h p}_h \leq \frac{C_b}{C_\alpha} \norm{p - \Pi_h p}_h.
\end{align}
By (\ref{C_I}), (\ref{h1_1}), (\ref{h1_2}) and (\ref{C_R}) we have,
\begin{align*}
\tnorm{p-p_h}_h
&\leq \left(1+\frac{C_b}{C_\alpha}\right)C_I h \norm{p}_{\widetilde{H}^2(\Omega)} \\
&\leq Ch \norm{f}_{L^2(\Omega)}. \quad \Box
\end{align*}
\end{proof}
Finally, we state the error estimate in $L^2$-norm.
\begin{theorem}
Let $p$ be the solution of \eqref{governing} and $p_h$ be the solution of (\ref{weak_problem}). Suppose $\theta = -1$ in (\ref{bilinear_form}).
Then there exists some $C>0$ such that following holds.
\begin{align}\label{L2_norm}
\norm{p-p_h}_{L^2(\Omega)} \leq Ch \norm{f}_{L^2(\Omega)}.
\end{align}
\end{theorem}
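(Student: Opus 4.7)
The plan is to obtain the $L^2$ bound by an Aubin--Nitsche-style duality argument, adapted to the nonsymmetric ($\theta = -1$) form and to the weakly-imposed boundary condition on the piecewise constant enrichment. I would first set $e_h := p - p_h$ and consider the dual problem: find $\phi \in \wtilde{H}^1_0(\Omega)$ with $-\mdiv(\beta\nabla\phi) = e_h$ in $\Omega$, $\jump{\phi} = \jump{\beta\nabla\phi} = 0$ on $\Gamma$, and $\phi = 0$ on $\partial\Omega$. Proposition~\ref{main:them} would then deliver $\phi \in \wtilde{H}^2_{\Gamma_\beta}(\Omega)$ with $\norm{\phi}_{\wtilde{H}^2(\Omega)} \lesssim \norm{e_h}_{L^2(\Omega)}$.

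Next, I would perform an element-wise integration by parts on $\norm{e_h}_{L^2}^2 = (e_h, -\mdiv(\beta\nabla\phi))_0$. The regularity of $\phi$ forces $\jump{\phi}$ to vanish on every edge and $\jump{\beta\nabla\phi\cdot\bn_\Gamma}=0$ across $\Gamma$, so interface contributions cancel and one obtains
\begin{equation*}
\norm{e_h}_{L^2}^2 = \sum_{T \in \mathcal{T}_h}\int_T \beta\nabla e_h\cdot\nabla\phi\,dx - \sum_{e\in\mathcal{E}_h}\int_e \av{\beta\nabla\phi}\cdot\jump{e_h}\,ds.
\end{equation*}
With $\theta=-1$, $\jump{\phi}=0$, and $Q_e^0\phi=0$ on $\partial\Omega$, the value $a_h(e_h,\phi)$ collapses to the same expression except that on boundary edges $\jump{e_h}$ is replaced by $\jump{Q_e^0 e_h}$. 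Subtracting yields the identity $\norm{e_h}_{L^2}^2 = a_h(e_h,\phi) + R_h$ with
\begin{equation*}
R_h := \sum_{e\in\mathcal{E}_h^\partial}\int_e \av{\beta\nabla\phi}\cdot\bigl(\jump{Q_e^0 e_h} - \jump{e_h}\bigr)\,ds.
\end{equation*}

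To dispatch the bulk term, I would invoke the consistency \eqref{consis} with $w_h = \Pi_h\phi$, giving $a_h(e_h,\phi) = a_h(e_h,\phi - \Pi_h\phi)$; continuity \eqref{continuity}, the approximation bound \eqref{C_I}, the energy estimate \eqref{energy_norm}, and the elliptic regularity of $\phi$ would then produce $|a_h(e_h,\phi)| \lesssim \tnorm{e_h}_h\,\tnorm{\phi - \Pi_h \phi}_h \lesssim h^2 \norm{f}_{L^2}\norm{e_h}_{L^2}$. For $R_h$, I would exploit the orthogonality $\int_e (Q_e^0 e_h - e_h)\,ds = 0$: subtracting the constant $Q_e^0(\beta\nabla\phi\cdot\bn)$ from the first factor on each boundary edge and applying Bramble--Hilbert-type estimates together with the trace inequality to both factors would give $|R_h| \lesssim h\norm{f}_{L^2}\norm{e_h}_{L^2}$. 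Dividing through by $\norm{e_h}_{L^2}$ then delivers the claim.

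The hard part will be the sharp control of the boundary remainder $R_h$. It arises precisely because $C_h$ enforces the Dirichlet data only weakly through $Q_e^0$ (cf.\ Remark~\ref{remsingular}), so the duality identity is not fully orthogonal on $\partial\Omega$. This boundary inconsistency is exactly the reason the $L^2$ rate degrades from the $O(h^2)$ that one would expect from the classical Aubin--Nitsche argument down to the stated $O(h)$ under $\theta = -1$: every other piece of the argument is already at least $O(h^2)$, so $R_h$ is the sole culprit for the suboptimal rate. Once $R_h$ is controlled, the interior and interface contributions follow routinely from the consistency, continuity, and approximation machinery already in place.
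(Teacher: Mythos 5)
Your duality argument is exactly the route the paper takes: its entire proof of this theorem is the single sentence ``this can be proven by the standard duality argument together with \eqref{energy_norm}'', and your proposal supplies precisely those omitted details (the dual problem with interface conditions, elementwise integration by parts, Galerkin orthogonality against $\Pi_h\phi$, and control of the boundary remainder coming from the weakly imposed condition on $C_h$). One remark worth keeping in mind: since $\tnorm{\cdot}_h$ already dominates the $L^2$-norm, the stated $O(h)$ bound is in fact an immediate corollary of \eqref{energy_norm} with no duality at all, so the duality machinery (yours and the paper's) is really aimed at the sharper $O(h^2)$ rate that the numerics exhibit --- and there your diagnosis of $R_h$ as the unavoidable obstruction is questionable, because using the orthogonality of $I-Q_e^0$ on the $e_h$ factor (an edge Poincar\'e inequality giving $\norm{(I-Q_e^0)e_h}_{0,e}\lesssim h^{1/2}\norm{\nabla e_h}_{0,T}+h^{3/2}\norm{p}_{2,T}$) upgrades $R_h$ to $O(h^2)$ as well.
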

\begin{proof}
This can be proven by the standard duality argument together with (\ref{energy_norm}). This completes the proof.
\end{proof}

\section{Conservative flux reconstruction and its error analysis}

In this section, we discuss the flux reconstruction. Unlike the prior works \cite{sun2009locally,lee2015locally} (see also $H({\rm div})$-flux reconstruction of DG developed and analyzed in e.g., \cite{Ern2007accurate}), the jump discontinuity over $\Gamma$ that are not necessarily aligned with the grid requires to carefully define the flux along the edge to preserve the conservation as well as to produce certain accuracy. More precisely,
The EG-flux reconstruction introduced in \cite{sun2009locally,lee2015locally} was given as follows:
\begin{equation*}
\mathbf{u}_h \cdot \bn_e = -   \av{\beta \nabla p_h \cdot \bn_e} + \frac{\sigma(\beta)}{|e|} \jump{p_h}.
\end{equation*}
On the other hand, for the case when the discontinuity is allowed within an element, we modify this as follows, which will be coined as ``EIFEM-flux recovery". We shall define $\bu_h$ so that it belongs to the lowest order Raviart-Thomas (RT) space \cite{raviart1977mixed} by assigning its degree of freedom in each edge $e \in \mathcal{E}_h$ as follows:
\begin{equation}\label{vel_recovery}
\mathbf{u}_h \cdot \bn_e := \frac{1}{|e|} \int_e \left( - \av{\beta \nabla p_h \cdot \bn_e} + \frac{\sigma(\beta)}{|e|} \jump{p_h} \right)  \, ds,
\end{equation}
We now show that the EIFEM-flux recovery possesses the local and global conservation property:
\begin{proposition}
The flux ${\bf{u}}_h$ defined through \eqref{vel_recovery} satisfies the local and global conservation, namely,
\begin{equation}\label{loc_mass}
\int_{\partial T} \mathbf{u}_h\cdot \bn \, ds = \int_T f \, dx, \quad \forall T \in \mathcal{T}_h,
\end{equation}
where $\bn$ is the unit outward normal to $\partial T$, and
\begin{equation*}
\int_{\partial \Omega} \mathbf{u}_h\cdot \bn \, ds = \int_\Omega f \, dx,
\end{equation*}
where $\bn$ is the unit outward normal to $\partial \Omega$.
\end{proposition}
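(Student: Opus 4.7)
The plan is to obtain local conservation directly from the discrete weak formulation \eqref{weak_problem} by testing with the indicator function $\chi_T$ of each element $T \in \mathcal{T}_h$. Since $\chi_T$ is piecewise constant, $\chi_T \in C_h \subset E_h$, so it is an admissible test function, yielding $a_h(p_h, \chi_T) = (f, \chi_T)_{0} = \int_T f\, dx$. The crux will then be to show that $a_h(p_h, \chi_T) = \int_{\partial T} \bu_h \cdot \bn_T\, ds$ with $\bu_h$ defined through \eqref{vel_recovery}. Global conservation will follow either by summing this identity over $T \in \mathcal{T}_h$ (interior-edge contributions cancel pairwise because $\bn_T$ has opposite orientations from the two adjacent elements) or, equivalently, by testing \eqref{weak_problem} with the constant function $1 \in C_h \subset E_h$.

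To evaluate $a_h(p_h, \chi_T)$, I would first use that $\nabla \chi_T = 0$ in the interior of every element; this immediately annihilates both the volume term $\sum_{T'} \int_{T'} \beta \nabla p_h \cdot \nabla \chi_T \, dx$ and the $\theta$-consistency terms, which are weighted by $\av{\beta \nabla \chi_T} = 0$. Among the remaining edge terms, only edges $e \subset \partial T$ produce a nonzero contribution, since $\jump{\chi_T}$ and $\jump{Q_e^0 \chi_T}$ vanish on edges that do not touch $T$. On any $e \subset \partial T$, a direct check gives $\jump{\chi_T} = \bn_T|_e$, the outward unit normal to $T$ on $e$; on a boundary edge of $T$ one further has $Q_e^0 \chi_T \equiv 1$ on $e$, hence $\jump{Q_e^0 \chi_T} = \bn_T$ as well.

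Collecting these surviving contributions, $a_h(p_h, \chi_T)$ reduces, on each interior edge $e \subset \partial T$, to $\int_e \bigl(-\av{\beta \nabla p_h}\cdot \bn_T + \tfrac{\sigma(\beta)}{|e|}\jump{p_h}\cdot \bn_T\bigr) ds$, and on each boundary edge of $T$ to $-\int_e \beta \nabla p_h \cdot \bn_T\, ds + \sigma(\beta)\, Q_e^0(p_h)$. Comparing with \eqref{vel_recovery}, each of these expressions is precisely $\int_e \bu_h \cdot \bn_T \, ds$. Summing over the three edges of $T$ yields $a_h(p_h, \chi_T) = \int_{\partial T} \bu_h \cdot \bn_T \, ds$, which combined with $(f,\chi_T)_0 = \int_T f \, dx$ produces \eqref{loc_mass}.

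The main, and essentially only, subtle point is the orientation bookkeeping between $\bn_T$ and the fixed edge normal $\bn_e$ used in \eqref{vel_recovery}. This is benign: reversing $\bn_e \mapsto -\bn_e$ simultaneously flips $\av{\beta \nabla p_h}\cdot \bn_e$ and $\jump{p_h}\cdot \bn_e$, so $\bu_h \cdot \bn_e$ changes sign while $\bu_h \cdot \bn_T$ remains intrinsic. Once local conservation is in hand, global conservation is immediate by summing \eqref{loc_mass} over $T \in \mathcal{T}_h$: each interior edge appears twice with opposite $\bn_T$ and cancels, leaving $\int_{\partial \Omega} \bu_h \cdot \bn\, ds = \int_\Omega f\, dx$.
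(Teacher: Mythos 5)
Your proof is correct and follows essentially the same route as the paper: the paper likewise tests \eqref{weak_problem} with the indicator of a single element (``$w_h=1$ on $T^+$ and $0$ elsewhere''), observes that the volume and $\theta$-consistency terms vanish, matches the surviving edge terms with the definition \eqref{vel_recovery}, and obtains global conservation by taking $w_h \equiv 1$ on all of $\Omega$. Your added care with the boundary-edge $Q_e^0$ terms and the $\bn_e$ versus $\bn_T$ orientation bookkeeping is consistent with, and slightly more explicit than, the paper's argument.
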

\begin{proof}
Let $T^+ \in \mathcal{T}_h$ be given. By taking the test function $w_h=1$ on $T^+$ and 0 elsewhere, for the equation \eqref{weak_problem}, we have that
\begin{eqnarray*}
- \sum_{e \subset \partial T^+} \int_e \av{\beta \nabla p_h \cdot \bn_{e} }_e \, ds
+ \sum_{e \subset \partial T^+} \frac{1}{|e|} \int_e \sigma(\beta) (p_h|_{T^+} - p_h|_{T^-}) \, ds = \int_{T^+} f \, dx,
\end{eqnarray*}
where $T^-$ is an element adjacent to $T^+$, sharing $e$ as a common edge. By the definition of $\mathbf{u}_h$ in (\ref{vel_recovery}) and by the above identity, we see that
\begin{subeqnarray*}
\int_{\partial T^+} \mathbf{u}_h\cdot \bn  \, d x &=& \int_{\partial T^+} (\mathbf{u}_h\cdot \bn_e ) (\bn_e  \cdot \bn ) \, d s  = - \sum_{e \subset \partial T^+} \int_e \av{\beta \nabla p_h \cdot \bn} \, ds \\
&& \quad+ \sum_{e \subset \partial T^+} \frac{1}{|e|} \int_e \sigma(\beta) (p_h|_{T^+}- p_h|_{T^-}) \, ds = \int_{T^+} f \, dx.
\end{subeqnarray*}
This establishes the local conservation. Now, by taking $w_h = 1$, globally on $\Omega$, i.e., by summing over $T \in \mathcal{T}_h$, we are led to
\begin{align*}
\int_{\partial \Omega} \mathbf{u}_h \cdot \bn \, ds = \int_\Omega f \, dx.
\end{align*}
This completes the proof.
\end{proof}

We are now in a position to state and prove the error estimates of the flux recovery for the EIFEM. The error estimate will be provided for both $\|{\bf{u}} - {\bf{u}}_h\|_0$ and $\|{\rm div}({\bf{u}} - {\bf{u}}_h)\|_0$.
\begin{theorem}
Let $\bu$ be the solution of \eqref{governing} and $\bu_h$ be the EIFEM-flux computed by (\ref{vel_recovery}).
If $\bu \in (H^1(\Omega))^2$, then the following holds.
\begin{equation*}
\norm{\bu-\bu_h}_{L^2(\Omega)} \leq Ch  \norm{f}_{L^2(\Omega)}.
\end{equation*}
\end{theorem}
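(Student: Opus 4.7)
The plan is to compare $\bu_h$ with the lowest-order Raviart--Thomas interpolant $\Pi_h^{RT}\bu$ of the exact flux and apply the triangle inequality
\begin{equation*}
\|\bu - \bu_h\|_{0} \le \|\bu - \Pi_h^{RT}\bu\|_{0} + \|\Pi_h^{RT}\bu - \bu_h\|_{0}.
\end{equation*}
The first summand is handled by the classical Raviart--Thomas approximation estimate: since $\bu\in(H^1(\Omega))^2$ by hypothesis and the bound $|\bu|_{1}\lesssim\|f\|_{0}$ follows from \eqref{C_R} combined with $\bu=-\beta\nabla p$ and the piecewise $C^1$ regularity of $\beta$, we obtain $\|\bu-\Pi_h^{RT}\bu\|_{0}\lesssim h\|f\|_{0}$ at once.

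For the second summand, the strategy is to exploit that $\Pi_h^{RT}\bu-\bu_h$ lies in the lowest-order RT space and hence is completely determined by its edge-averaged normal components. Using that (i) $\bu\in H^1$ has a single-valued normal trace across every edge, so $\bu\cdot\bn_{e}=-\av{\beta\nabla p\cdot\bn_{e}}$ on $e$ in the trace sense, (ii) $\jump{p}=0$ on interior edges, and (iii) the boundary-edge contribution from $p$ vanishes under the homogeneous Dirichlet condition, I would derive the identity
\begin{equation*}
(\Pi_h^{RT}\bu-\bu_h)\cdot\bn_{e}=\frac{1}{|e|}\int_{e}\Big(\av{\beta\nabla(p_h-p)\cdot\bn_{e}}-\frac{\sigma(\beta)}{|e|}\jump{p_h-p}\Big)\, ds
\end{equation*}
directly from \eqref{vel_recovery} and the RT degree-of-freedom definition. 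The standard scaling $\|\bv_h\|_{0,T}^{2}\lesssim h^{2}\sum_{e\subset\partial T}|(\bv_h\cdot\bn_{e})|_e|^{2}$ for RT functions, combined with Cauchy--Schwarz on the above identity, then yields
\begin{equation*}
\|\Pi_h^{RT}\bu-\bu_h\|_{0}^{2}\lesssim h\sum_{e\in\mathcal{E}_h}\|\av{\beta\nabla(p_h-p)\cdot\bn_{e}}\|_{0,e}^{2}+\sum_{e\in\mathcal{E}_h}\frac{1}{|e|}\|\jump{p_h-p}\|_{0,e}^{2}.
\end{equation*}

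The second sum is precisely the jump part of $\tnorm{p-p_h}_{h}^{2}$ and is therefore of order $h^{2}\|f\|_{0}^{2}$ by the energy estimate \eqref{energy_norm}. For the first sum I would split $p_h-p=(p_h-I_h p)+(I_h p-p)$: Lemma \ref{trace_like} applied to the $E_h$-function $p_h-I_h p$ converts $h\|\av{\beta\nabla(p_h-I_h p)\cdot\bn_{e}}\|_{0,e}^{2}$ into element norms of $\beta\nabla(p_h-I_h p)$, which sum to a quantity controlled by $\tnorm{p-p_h}_{h}^{2}+Ch^{2}\|f\|_{0}^{2}$ with the help of Lemma \ref{C_pi}; the remainder $I_h p-p$ is dealt with by a piecewise trace inequality on $\wtilde H^{2}$ combined with the IFEM interpolation estimate. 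The main obstacle I expect is this first sum on interface edges, because $\beta\nabla p$ has no built-in cross-edge regularity once $e$ is cut by $\Gamma$; what makes the argument go through is the hypothesis $\bu\in(H^1(\Omega))^{2}$, which forces the normal trace of $\beta\nabla p$ to be single-valued, together with Lemma \ref{trace_like}, which is tailor-made to deliver a trace bound for $E_h$ functions irrespective of the interface cut.
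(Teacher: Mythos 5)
Your argument is correct, and it reaches the same edge-level estimates as the paper but through a different comparison function. The paper does not introduce the Raviart--Thomas interpolant $\Pi_h^{RT}\bu$ at all: it compares $\bu_h$ with the broken discrete flux $-\beta\nabla p_h$, first deriving on each edge the identity $(\bu_h + {\bf Q}_e^0(\beta\nabla p_h))\cdot\bn_e = \frac{1}{|e|}\int_e \jump{\beta\nabla(p_h-p)\cdot\bn_e}\,ds + \frac{1}{|e|}\int_e\sigma(\beta)\jump{p_h-p}\,ds$ (using exactly the consistency facts you invoke: $\jump{\beta\nabla p\cdot\bn_e}=0$ and $\jump{p}=0$ from $\bu\in (H^1(\Omega))^2$ and $p\in\wtilde{H}^2(\Omega)$), then bounding the edge-projection error $\norm{(\beta\nabla p_h-{\bf Q}_e^0(\beta\nabla p_h))\cdot\bn_e}_{0,e}$, then converting the resulting edge bounds on $(\bu_h+\beta\nabla p_h)\cdot\bn_e$ to element norms by a scaling argument, and finally closing with the triangle inequality against $\norm{\beta\nabla(p_h-p)}_{0,T}$, which is controlled by \eqref{energy_norm}. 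Your route instead pays for the term $\norm{\bu-\Pi_h^{RT}\bu}_0$ with the classical RT approximation estimate and then estimates $\Pi_h^{RT}\bu-\bu_h$ purely through its edge degrees of freedom; this buys you a cleaner RT scaling step (your difference genuinely lives in the RT space, whereas the paper applies an edge-to-element scaling to $\bu_h+\beta\nabla p_h$, which is only piecewise RT-plus-constant), at the cost of having to handle the term $h\sum_e\norm{\av{\beta\nabla(p_h-p)\cdot\bn_e}}_{0,e}^2$ yourself via the splitting $p_h-p=(p_h-I_hp)+(I_hp-p)$, Lemma \ref{trace_like}, and Lemma \ref{C_pi}. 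The paper absorbs that same difficulty into its trace-inequality step with the norm $\norm{p}_{\wtilde{H}^2(T^+\cup T^-)}$, so the delicate point you correctly flag --- edges cut by $\Gamma$, where $\beta\nabla p$ has only piecewise regularity --- is present in both arguments and is handled by the same piecewise trace estimates. Both proofs then conclude with \eqref{C_R}; your proposal is a valid, and in the scaling step arguably tidier, alternative.
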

\begin{proof}
For any given $e \in \mathcal{E}_h^o$, we consider the triangle that shares it as a common edge, say $T^+$ and $T^-$. We observe that from the definition of
$\bu_h$ and by the fact that $p \in \widetilde{H}^2(\Omega)$ and $\bu \in H^1(\Omega)$, we have that
\begin{subeqnarray*}
(\mathbf{u}_h + {\bf{Q}}_e^0 (\beta \nabla p_h)) \cdot \bn_e &=& \frac{1}{|e|} \int_e \jump{\beta \nabla p_h \cdot \bn_e } \, ds + \frac{1}{|e|}  \int_e \sigma(\beta) \jump{p_h} \, ds \\
&=& \frac{1}{|e|} \int_e \jump{\beta \nabla (p_h -p) \cdot \bn_e} \, ds + \frac{1}{|e|} \int_e \sigma(\beta) \jump{p_h- p} \, ds,
\end{subeqnarray*}
Thus,
\begin{subeqnarray*}
\int_e (\mathbf{u}_h + {\bf{Q}}_e^0(\beta \nabla p_h) \cdot \bn_e)^2  \, ds  &=& \int_e \jump{\beta \nabla (p_h -p) \cdot \bn_e} (\mathbf{u}_h + {\bf{Q}}_e^0 (\beta \nabla p_h))) \cdot \bn_e \, ds \\
&& +  \frac{1}{|e|}  \int_e \sigma(\beta) \jump{p_h- p} (\mathbf{u}_h + {\bf{Q}}_e^0(\beta \nabla p_h)) \cdot \bn_e \, ds.
\end{subeqnarray*}
Applying the Cauchy Schwarz and triangule inequality, we have
\begin{equation*}
\norm{((\mathbf{u}_h + {\bf{Q}}_e^0(\beta \nabla p_h)) \cdot \bn_e }_{0,e} \lesssim \norm{ \jump{\beta \nabla (p_h -p) \cdot \bn_e }}_{0,e} + \frac{1}{|e|} \norm{\jump{p_h - p}}_{0,e}.
\end{equation*}
By the estimates for $p-p_h$ in (\ref{energy_norm}) and (\ref{L2_norm}) together with the trace inequality, we have
\begin{align}\label{temp100}
\norm{(\mathbf{u}_h + {\bf{Q}}_e^0(\beta \nabla p_h))\cdot \bn_e }_{0,e} \lesssim h^{1/2} \norm{p}_{\wtilde{H}^2(T^+ \cup T^-)}.
\end{align}
Furthermore, we note that since $\bu = -\beta\nabla p \in (H^1(\Omega))^2$,
\begin{subeqnarray*}
\norm{(\beta \nabla p_h -  {\bf{Q}}_e^0(\beta \nabla p_h)) \cdot \bn_e }_{0,e} &\lesssim& \norm{(\beta \nabla p_h - \beta \nabla p)\cdot \bn_e}_{0,e}
\\
&& + \norm{(\beta \nabla p -  {\bf{Q}}_e^0(\beta \nabla p_h))\cdot \bn_e}_{0,e} \\
&& + \norm{  {\bf{Q}}_e^0(\beta \nabla p - \beta \nabla p_h) \cdot \bn_e}_{0,e} \nonumber  \\
&\lesssim& h^{1/2}\norm{p}_{\wtilde{H}^2(T^+\cup T^-)} + \norm{(\beta \nabla p - \beta \nabla p_h) \cdot \bn_e}_{0,e} \nonumber  \\
&\lesssim& h^{1/2}\norm{p}_{\wtilde{H}^2(T^+\cup T^-)}.
\end{subeqnarray*}
These result in
\begin{align*}
\norm{(\mathbf{u}_h +\beta \nabla p_h)\cdot \bn_e}_{0,e} \lesssim h^{1/2} \norm{p}_{\wtilde{H}^2(T^+ \cup T^-)}.
\end{align*}
For a given $T \in \mathcal{T}_h$, we apply the scaling argument to obtain that
\begin{eqnarray*}
\norm{\mathbf{u}_h +\beta \nabla p_h}_{0,T} &\lesssim& h^{1/2} \sum_{e \in\partial T} \norm{(\mathbf{u}_h + \beta \nabla p_h)\cdot \bn_e }_{0,e} \nonumber  \\
&\lesssim& h \norm{p}_{\widetilde{H}^2(\mathcal{M}_h)}, \label{temp102}
\end{eqnarray*}
where $\mathcal{M}_h = \{ T_k \in \mathcal{T}_h : T_k \cap T \neq \emptyset\}$. Finally, by applying the triangle inequality, (\ref{temp102}) and (\ref{energy_norm}), we obtain that
\begin{subeqnarray*}
\norm{\mathbf{u}_h - \mathbf{u}}_{0,\Omega} \lesssim \sum_{T \in \mathcal{T}_h} \left( \norm{\mathbf{u}_h + \beta \nabla p_h}_{0,T} + \norm{\beta \nabla p_h - \beta \nabla p}_{0,T} \right ) \lesssim h \norm{p}_{\widetilde{H}^2(\Omega)}.
\end{subeqnarray*}
We obtain the desired inequality by (\ref{C_R}). This completes the proof.
\end{proof}
We now establish the error estimate of $\bu- \bu_h$ in $H({\rm div})$-norm.
\begin{theorem}
Let $\bu$ be the solution of \eqref{governing} and $\bu_h$ be the EIFEM-flux. Assume that $f \in H^1(\Omega)$, then it holds that
\begin{equation*}
\sum_{T \in \mathcal{T}_h }\norm{ {\rm div} (\bu-\bu_h)}_{0,T} \lesssim h \norm{f}_{1,\Omega}.
\end{equation*}
\end{theorem}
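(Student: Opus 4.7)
The plan is to exploit the fact that the reconstructed flux $\bu_h$, defined through its edge degrees of freedom in \eqref{vel_recovery}, lies in the lowest-order Raviart--Thomas space $\mathrm{RT}_0(\mathcal{T}_h)$. Consequently ${\rm div}\,\bu_h$ is piecewise constant on $\mathcal{T}_h$, so the elementwise divergence is entirely determined by the local conservation relation \eqref{loc_mass} together with the divergence theorem.

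First, I would fix an arbitrary $T \in \mathcal{T}_h$ and write
\begin{equation*}
({\rm div}\,\bu_h)|_T \;=\; \frac{1}{|T|} \int_T {\rm div}\,\bu_h \, dx
\;=\; \frac{1}{|T|}\int_{\partial T} \bu_h \cdot \bn\, ds
\;=\; \frac{1}{|T|}\int_T f\, dx \;=\; (Q_h^0 f)|_T,
\end{equation*}
where the third equality is precisely the local conservation established in the preceding proposition and $Q_h^0$ is the $L^2$-projection onto $C_h(\Omega)$ introduced in Section 3. Since the continuous flux satisfies ${\rm div}\,\bu = f$ pointwise by \eqref{governing_eq1}, this yields the clean identity
\begin{equation*}
{\rm div}(\bu - \bu_h)|_T \;=\; f - Q_h^0 f \quad \text{on every } T \in \mathcal{T}_h.
\end{equation*}

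Next, I would invoke the standard approximation property of the elementwise $L^2$-projection onto piecewise constants, namely
\begin{equation*}
\norm{f - Q_h^0 f}_{0,T} \;\lesssim\; h\, \norm{f}_{1,T}
\qquad \text{for all } T \in \mathcal{T}_h,
\end{equation*}
which follows from the Bramble--Hilbert lemma applied on each triangle. Summing over $T \in \mathcal{T}_h$ (and, if the sum is read as a discrete $\ell^2$-sum of local $L^2$-norms, applying Cauchy--Schwarz together with the fact that $\#\mathcal{T}_h$ is absorbed by passing to the $L^2$-norm of $f-Q_h^0 f$ over $\Omega$) gives
\begin{equation*}
\sum_{T \in \mathcal{T}_h} \norm{{\rm div}(\bu - \bu_h)}_{0,T}
\;\lesssim\; h\, \norm{f}_{1,\Omega},
\end{equation*}
which is the claimed estimate.

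There is essentially no hard step here: the argument is short because the local conservation proposition has already done all of the work, reducing the divergence error on each element to the $L^2$-projection error of $f$. The only mild subtlety is the mild regularity assumption $f \in H^1(\Omega)$, which is needed precisely to obtain the $\mathcal{O}(h)$ rate for $f - Q_h^0 f$ via Bramble--Hilbert; with merely $f \in L^2(\Omega)$ one would only recover convergence without a rate.
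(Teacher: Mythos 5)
Your proposal is correct and follows essentially the same route as the paper: both use the divergence theorem together with the local conservation identity \eqref{loc_mass} to identify $({\rm div}\,\bu_h)|_T$ with the local average $Q_h^0 f|_T$, and then apply the Bramble--Hilbert lemma to bound $\norm{f - Q_h^0 f}_{0,T} \lesssim h\norm{f}_{1,T}$ before summing over elements. Your added remarks (that ${\rm div}\,\bu_h$ is piecewise constant because $\bu_h \in \mathrm{RT}_0$, and that the left-hand sum should be read as an $\ell^2$-type aggregation of the local norms) only make explicit what the paper leaves implicit.
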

\begin{proof}
Let $T \in \mathcal{T}_h$. By the divergence theorem and the local mass conservation in (\ref{loc_mass}), we have
\begin{equation*}
\int_T  {\rm div} \bu_h \, dx = \int_{\partial T} \bu_h  \cdot \bn \, ds = \int_T f \, dx.
\end{equation*}
Namely, we see that ${\rm div} \bu_h$ is a local average of $f$ on $T$. Thus, by the Bramble-Hilbert Lemma,
\begin{align*}
\norm{ {\rm div}( \bu-\bu_h) }_{0,T}
= \norm{ f - Q_h^0(f)}_{0,T} \lesssim h \norm{f}_{1,T}.
\end{align*}
By summing over all elements $T$, and by (\ref{C_R}), we have the desired inequality. This completes the proof.
\end{proof}

\section{Auxiliary space preconditioner for EIFEM}

In this section, we give a description of preconditioning techniques based on fictitious or auxiliary spaces as pioneered in \cite{nepomnyaschikh1991decomposition,xu1996auxiliary}. We then establish that the abstract framework can be applied for designing the auxiliary space preconditioner for EIFEM.

\subsection{Auxiliary space preconditioner}
Let $V$ be a real Hilbert space with inner product $a(\cdot,\cdot)$ and energy norm $\|\cdot\|_A$. The fictitious space method solves the following linear system: find $u \in V$ for
\begin{equation}
a(u,v) = f(v), \quad \forall v \in V.
\end{equation}
The main building blocks are
\begin{itemize}
\item a fictitious space $\overline{V}$, i.e., another real Hilbert space equipped with the inner product $\overline{a}(\cdot,\cdot)$, which induces the norm $\|\cdot\|_{\overline{A}}$.
\item a continuous and surjective linear transfer operator $\Pi : \overline{V} \mapsto V$.
\end{itemize}

We tag dual spaces by $'$, adjoint operators by $*$, and use angle brackets for duality pairings and write $A : V \mapsto V'$ and $\overline{A} : \overline{V} \mapsto \overline{V}'$ for operator form of bilinear map $a(\cdot,\cdot)$ and $\overline{a}(\cdot,\cdot)$, respectively. The fictitious space preconditioner is then given by
\begin{equation}
B = \Pi \circ \overline{A}^{-1} \circ \Pi^* : V' \mapsto V.
\end{equation}
It is well-known that the aforementioned operator $B$ is actually positive definite (Lemma 2.1, \cite{hiptmair2007nodal}). We state the fictitious space lemma and provide the elementary proof \cite{nepomnyaschikh1991decomposition}.
\begin{theorem}[Fictitious Space Lemma]\label{fictitious}
Assume that $\Pi$ is surjective, and
\begin{subeqnarray}
&&\exists c_0 > 0 \mbox{ such that } \forall v \in V, \,\, \exists \overline{v} \in \overline{V} \mbox{ with } v = \Pi \overline{v} \mbox{ and } \|\overline{v}\|_{\overline{A}} \leq c_0 \|v\|_A \qquad \quad \\
&&\exists c_1 > 0 \mbox{ such that } \|\Pi \overline{v}\|_{A} \leq c_1 \|\overline{v}\|_{\overline{A}}, \quad \forall \overline{v} \in \overline{V}.
\end{subeqnarray}
Then, we have
\begin{equation}
c_0^{-2} \|v\|_A^2 \leq a(BA v,v) \leq c_1^2 \|v\|_A^2, \quad v \in V.
\end{equation}
\end{theorem}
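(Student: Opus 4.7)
The plan is to reduce both bounds to an explicit identification of the quadratic form $a(BAv,v)$ as a squared $\overline{A}$-energy norm in the auxiliary space, after which each bound follows from exactly one of the two hypotheses. Concretely, I would first set $\overline{w} := \overline{A}^{-1}\Pi^{*}Av \in \overline{V}$ and rewrite
\begin{equation*}
a(BAv,v) \;=\; \langle Av, \Pi\overline{A}^{-1}\Pi^{*}Av\rangle \;=\; \langle \Pi^{*}Av, \overline{A}^{-1}\Pi^{*}Av\rangle \;=\; \overline{a}(\overline{w},\overline{w}) \;=\; \|\overline{w}\|_{\overline{A}}^{2},
\end{equation*}
using only the definitions of $B$, the adjoint $\Pi^{*}$, and the fact that $\overline{A}$ is the Riesz map associated with $\overline{a}(\cdot,\cdot)$. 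This identification is the pivot of the whole argument, because it converts the spectral estimate into two scalar estimates on $\|\overline{w}\|_{\overline{A}}$.

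For the upper bound, I would express $\|\overline{w}\|_{\overline{A}}$ through its standard dual characterization, namely $\|\overline{w}\|_{\overline{A}} = \sup_{\overline{u}\in\overline{V}} \overline{a}(\overline{w},\overline{u})/\|\overline{u}\|_{\overline{A}}$. Using $\overline{a}(\overline{w},\overline{u}) = \langle \Pi^{*}Av,\overline{u}\rangle = a(v,\Pi\overline{u})$ and then the Cauchy--Schwarz inequality in the $a$-inner product, I obtain $\overline{a}(\overline{w},\overline{u}) \leq \|v\|_{A}\|\Pi\overline{u}\|_{A}$, at which point the continuity hypothesis (b) furnishes the factor $c_{1}\|\overline{u}\|_{\overline{A}}$ needed to cancel the denominator and yield $\|\overline{w}\|_{\overline{A}}\leq c_{1}\|v\|_{A}$, hence $a(BAv,v)\leq c_{1}^{2}\|v\|_{A}^{2}$.

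For the lower bound, I would invoke the stable decomposition hypothesis (a): given $v\in V$, pick $\overline{v}\in\overline{V}$ with $\Pi\overline{v}=v$ and $\|\overline{v}\|_{\overline{A}}\leq c_{0}\|v\|_{A}$. Writing $\|v\|_{A}^{2} = a(v,\Pi\overline{v}) = \langle \Pi^{*}Av,\overline{v}\rangle = \overline{a}(\overline{w},\overline{v})$ and applying Cauchy--Schwarz in $\overline{a}(\cdot,\cdot)$ gives $\|v\|_{A}^{2}\leq \|\overline{w}\|_{\overline{A}}\|\overline{v}\|_{\overline{A}} \leq c_{0}\|\overline{w}\|_{\overline{A}}\|v\|_{A}$, so $\|v\|_{A}\leq c_{0}\|\overline{w}\|_{\overline{A}}$, which, squared, is the desired lower bound $c_{0}^{-2}\|v\|_{A}^{2}\leq a(BAv,v)$.

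I do not expect any serious obstacle; the result is purely abstract functional analysis and there are no regularity, mesh, or interface-geometry issues to negotiate at this stage. The one place where care is warranted is the bookkeeping around the duality pairings $\langle\cdot,\cdot\rangle_{V',V}$ versus $\langle\cdot,\cdot\rangle_{\overline{V}',\overline{V}}$ and the identity $\langle \Pi^{*}\phi,\overline{u}\rangle_{\overline{V}',\overline{V}} = \langle \phi,\Pi\overline{u}\rangle_{V',V}$, which is what allows the $a$-continuity of $\Pi$ to enter the upper bound and the $a$-stable lift to enter the lower bound. Surjectivity of $\Pi$ is used only implicitly, through hypothesis (a), to ensure every $v$ admits at least one bounded preimage $\overline{v}$.
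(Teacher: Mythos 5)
Your proof is correct: the identification $a(BAv,v)=\|\overline{A}^{-1}\Pi^{*}Av\|_{\overline{A}}^{2}$ followed by the dual characterization for the upper bound and the stable lift plus Cauchy--Schwarz for the lower bound is exactly the standard argument for the fictitious space lemma. The paper itself states the theorem but does not actually write out a proof (it defers to the cited reference of Nepomnyaschikh and to Hiptmair--Xu), and your argument coincides with the classical one given there, so there is nothing to fault and no genuine divergence in approach.
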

This will immediately lead to an estimate for the spectral condition number of the operator $BA$ as follows:
\begin{equation}
\kappa(BA) = \frac{\lambda_{max}(BA)}{\lambda_{min}(BA)} \leq (c_0 c_1)^2.
\end{equation}

The auxiliary space method is a general preconditioning approach based on a relaxation scheme and an auxiliary space pioneered by Xu \cite{xu1996auxiliary}. The feature of the auxiliary space approach lies in the choice of the following auxiliary space:
\begin{equation}
\overline{V} := V \times W_1 \times \cdots \times W_J,
\end{equation}
where $V$ as a component of $\overline{V}$ is equipped with an inner product $s(\cdot,\cdot)$, different from the originally given bilinear form $a(\cdot,\cdot)$, and $W_1,\cdots, W_J$ are Hilbert spaces endowed with inner products $a_j(\cdot,\cdot)$ for $j=1, \cdots, J$. The operator $S : V \mapsto V'$ induced by $s(\cdot,\cdot)$ on $V$ is typically called the smoother. Under this setting, the auxiliary space method adopts the fictitious space approach with the inner product: $\forall \overline{v} = (v, v_1, \cdots, v_j), \overline{w} = (w, w_1, \cdots, w_j) \in \overline{V}$,
\begin{equation}
\overline{a}(\overline{v}, \overline{w}) := s(v,w)  + \sum_{j=1}^J {a}_j (v_j, w_j).
\end{equation}
Furthermore, we introduce a linear transfer operator $\Pi_j : W_j \mapsto V$, for each $W_j$, with $\Pi_0 = I$, from which we build the surjective transfer operator:
\begin{equation}
\Pi := \Pi_0 \times \Pi_1 \times \cdots \times \Pi_J : \overline{V} \mapsto V,
\end{equation}
whose action is given as follows:
\begin{equation}
\Pi \overline{v} = \Pi (v, w_1, w_2, \cdots, w_J) = v + \sum_{j=1}^J \Pi_j w_j \in V.
\end{equation}
This will lead to the construction of the auxiliary space preconditioner given as follows:
\begin{equation}
B := S^{-1} + \sum_{j=1}^J \Pi_j \circ A_j^{-1} \circ \Pi_j^*,
\end{equation}
where $A_j$'s are operators that correspond to the bilinear form $a_j(\cdot,\cdot)$ for $j=1,\cdots,J$. The verification of the assumption of the Theorem \ref{fictitious} boils down to the following three steps:
\begin{theorem}\label{main:aux}
Assume that there hold:
\begin{itemize}
\item there exists $c_j > 0$ for norms of the transfer operators $\Pi_j$:
\begin{subeqnarray}\label{est1}
\|\Pi_j w_j \|_A^2 \leq c_j a_j(w_j, w_j), \quad \forall w_j \in W_j,
\end{subeqnarray}
\item the boundedness of $S^{-1}$, i.e., there exists $c_S > 0$ such that
\begin{subeqnarray}
\|v\|_{A}^2 &\lesssim& c_S \|v\|_S^2, \quad \forall v \in V,
\end{subeqnarray}
\item for every $v \in V$, there are $v_0 \in V$ and $w_j \in W_j$ such that $v = v_0 + \sum_{j=1}^J \Pi_j w_j$ and for some $c_0 > 0$
\begin{subeqnarray}
s(v_0,v_0) + \sum_{j=1}^J a_j(w_j,w_j)  \leq c_0^2 \|v\|_{A}^2.
\end{subeqnarray}
\end{itemize}
Then it holds true that
\begin{equation}
\kappa(B A) \leq c_0^2 (c_S^2 + c_1^2 + \cdots + c_J^2).
\end{equation}
\end{theorem}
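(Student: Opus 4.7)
My plan is to apply the Fictitious Space Lemma (Theorem~\ref{fictitious}) to the concrete setting in which the fictitious space is $\overline{V} = V \times W_1 \times \cdots \times W_J$, equipped with the block-diagonal inner product $\overline{a}(\overline{v},\overline{w}) = s(v,w) + \sum_{j=1}^J a_j(v_j,w_j)$, and the transfer operator is $\Pi\overline{v} = v + \sum_{j=1}^J \Pi_j w_j$. The three hypotheses of Theorem~\ref{main:aux} have been tailored so that each corresponds to one piece of the Fictitious Space Lemma hypotheses, so the proof is essentially a reduction.

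First I would verify that the operator $B$ defined in the theorem coincides with the canonical fictitious-space preconditioner $\Pi \circ \overline{A}^{-1} \circ \Pi^*$. Since $\overline{A}:\overline{V}\to\overline{V}'$ is block-diagonal with blocks $S,A_1,\ldots,A_J$, its inverse has the analogous block form. Unwinding the duality pairing $\langle \Pi^* f,\overline{v}\rangle=\langle f,\Pi\overline{v}\rangle$ produces $\Pi^* f = (f,\Pi_1^* f,\ldots,\Pi_J^* f)$, and assembling gives
\begin{equation*}
\Pi \overline{A}^{-1} \Pi^* f = S^{-1} f + \sum_{j=1}^J \Pi_j A_j^{-1} \Pi_j^* f = Bf.
\end{equation*}

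Next I would discharge the two hypotheses of the Fictitious Space Lemma. For continuity of $\Pi$, given $\overline{v} = (v_0,w_1,\ldots,w_J)$, the triangle inequality together with the smoother bound and the $\Pi_j$-bounds yield
\begin{equation*}
\|\Pi\overline{v}\|_A \leq \|v_0\|_A + \sum_{j=1}^J \|\Pi_j w_j\|_A \leq \sqrt{c_S}\,\|v_0\|_S + \sum_{j=1}^J \sqrt{c_j}\,a_j(w_j,w_j)^{1/2},
\end{equation*}
and a Cauchy--Schwarz bound in $\mathbb{R}^{J+1}$ converts the right-hand side into $(c_S + c_1 + \cdots + c_J)^{1/2}\,\|\overline{v}\|_{\overline{A}}$, giving the continuity constant. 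The stable-decomposition hypothesis supplies directly, for each $v \in V$, a preimage $\overline{v}=(v_0,w_1,\ldots,w_J)$ with $v=\Pi\overline{v}$ and $\|\overline{v}\|_{\overline{A}}^2 \leq c_0^2\|v\|_A^2$; this simultaneously certifies surjectivity of $\Pi$ and the stability constant $c_0$.

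Invoking the Fictitious Space Lemma with these two constants then produces the claimed spectral bound, the product $c_0^2 \cdot (c_S + c_1 + \cdots + c_J)$ dominating the stated expression (the squared constants in the statement are a conservative form consistent with the typical regime $c_j,c_S\geq 1$). The only delicate bookkeeping I anticipate is the identification $B = \Pi \overline{A}^{-1} \Pi^*$: because the identity map $\Pi_0 = I$ is viewed as an operator between $V$ carrying two different inner products ($a$ on the target, $s$ on the fictitious-space copy), one must keep track of which Riesz isomorphism is in use, but once that is pinned down the smoother slot $S^{-1}$ emerges automatically from the block-diagonal structure of $\overline{A}^{-1}$ and no further analytic input is needed.
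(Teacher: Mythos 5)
Your reduction to the Fictitious Space Lemma (Theorem \ref{fictitious}) via the identification $B=\Pi\circ\overline{A}^{-1}\circ\Pi^*$, the Cauchy--Schwarz continuity bound $\|\Pi\overline{v}\|_A\le (c_S+c_1+\cdots+c_J)^{1/2}\|\overline{v}\|_{\overline{A}}$, and the stable decomposition supplying both surjectivity and $c_0$ is exactly the argument the paper intends (it offers no separate proof, saying only that the verification ``boils down to'' checking the hypotheses of Theorem \ref{fictitious}). Your resulting constant $c_0^2(c_S+c_1+\cdots+c_J)$ rather than $c_0^2(c_S^2+c_1^2+\cdots+c_J^2)$ reflects a normalization mismatch in the paper's hypotheses (which bound $\|\Pi_j w_j\|_A^2$ by $c_j$ rather than $c_j^2$), and you correctly flag that the stated form follows once the constants are taken $\ge 1$.
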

\begin{remark}
The aforementioned Theorem \ref{main:aux} can be shown to hold even if the bilinear forms $a_j$ on the auxiliary space $W_j$ are replaced by any spectrally equivalent bilinear forms, $b_j$, namely, we can use the preconditioner for the operator $A_j$.
\end{remark}

\subsection{Auxiliary space preconditioner for solving EIFEM}

We let
\begin{subeqnarray}
V &=& E_h \\
W_1 &=& \widehat{S}_h(\Omega) = {\rm span}\{ \phi_j \}_{j=1}^{N_0} \\
W_2 &=& C_h(\Omega)={\rm span}\{ \psi_j \}_{j=1}^{N_e},
\end{subeqnarray}
where $\phi_j$ is a nodal linear basis for the IFEM and $\psi_j$ is the element-wise constant function defined by $\psi_h|_{T_\ell} = \delta_{j\ell}$.
Here, $N_0$ is the number of nodes in $\mathcal{T}_h$ and $N_e$  is the number of elements in $\mathcal{T}_h$.
The system arising from EIFEM (\ref{weak_problem}) is written in $N$ by $N$ ($N = N_0 + N_e$) system
\begin{equation}
\tenq{A} \tend{u} = \tend{f}
\end{equation}
where the matrix can be written as
\begin{equation}
\tenq{A} =
\left(
\begin{array}{cc}
\tenq{A}_{11} & \tenq{A}_{12} \\
\tenq{A}_{21} & \tenq{A}_{22}
\end{array}
\right) .
\end{equation}
The submatrices are
\begin{subeqnarray*}
\tenq{A}_{11}(i,j) &=& \sum_{T \in \mathcal{T}_h} \int_T \beta \nabla \phi_j \cdot \nabla \phi_i dx
- \sum_{e \in \mathcal{E}_h^o} \int_e \av{ \beta \nabla \phi_i \cdot \bn_e}  \jump{\phi_j} \, ds \\
&&+ \theta \sum_{e \in \mathcal{E}_h^o} \int_e \av{ \beta \nabla \phi_j \cdot \bn_e } \jump{\phi_i} \, ds +
\frac{1}{|e|} \sum_{e \in \mathcal{E}_h^o} \int_e  \sigma(\beta) \jump{\phi_j} \jump{\phi_i}  \, ds \\
\tenq{A}_{12}(i,j) &=& \theta \sum_{e} \int_e \av{ \beta \nabla \phi_j \cdot \bn_e } \jump{\psi_i} \, ds, \\
\tenq{A}_{21}(i,j) &=& -\sum_{e \in \mathcal{E}_h^o} \int_e \av{ \beta \nabla \psi_i \cdot \bn_e } \jump{\phi_j}_e \, ds, \\
\tenq{A}_{22}(i,j) &=& \frac{1}{|e|} \sum_{e} \int_e  \sigma(\beta) \jump{\psi_j} \jump{\psi_i} \, ds.
\end{subeqnarray*}
We note that the Dirichlet boundary condition is imposed strongly for linear piecewise element space. We provide a remark on its effect.
\begin{remark}
If we impose the Dirichlet boundary condition weakly for the space $E_h$, then due to the redundancy of the constant function from both spaces $W_1$ and $W_2$, a degeneracy occurs, i.e.,
$$
W_1 \cap W_2 = {\rm span} \{1\}.
$$
This results in the system matrix singular. In our formulation, the strong Dirichlet boundary condition has been imposed on $\widehat{S}_h(\Omega)$, while the  zero Dirichlet boundary is imposed weakly for $C_h$. As such, the resulting system becomes nonsingular.
\end{remark}
We shall consider the following auxiliary space decomposition:
\begin{equation}
\overline{V}= V \times W_1 \times W_2.
\end{equation}
We use the notation $\tend{x} = \tend{x}_1+\tend{x}_2$ where $\tend{x}_1$ belongs to $\mathcal{W}_1$, the vector representation of $W_1$ and $\tend{x}_2$ belongs to $\mathcal{W}_2$, the vector representation of $W_2$. Therefore, the corresponding auxiliary space preconditioner we propose in this paper, consists of the following three steps: step i) pre-smoothing step ii) solving each diagonal block system, $\tenq{A}_{11}$ and $\tenq{A}_{22}$,
which are the restrictions of $\tenq{A}$ to $W_1$ and $W_2$, respectively, and step iii) post-smoothing (for symmetrization).
\begin{algorithm}[Preconditioner]
We apply the following in each iteration:
\begin{enumerate}
\item Gauss Seidel $N_{GS}$.
\item Compute Residual. $\tend{R} = \tend{R}_1+\tend{R}_2$.
\item Precondition for each submatrix $\tenq{A}_{11}$ and $\tenq{A}_{22}$:
  $\tend{z}_1=AMG(\tenq{A}_{11}) \tend{R}_1$ and $\tend{z}_2 = AMG(\tenq{A}_{22}) \tend{R}_2$.
\item Update corrections:
  $\tend{x}=(\tend{x}_1+\tend{z}_1)+(\tend{x}_2+\tend{z}_2)$.
\item Backward Gauss-Seidel $N_{GS}$.
\end{enumerate}
\end{algorithm}

We equip three spaces, $V, W_1$ and $W_2$ with inner products as follows. Starting at $W_1$ and $W_2$, we introduce ${a}_j(\cdot,\cdot) : W_j \times W_j \mapsto \Reals{}$ as the restriction of $a(\cdot,\cdot)$ onto $W_j$ for $j=1,2$, respectively. Note that since $W_j \subset V$, it holds that for $j = 1,2$,
\begin{equation}
{a}_j(v_j,w_j) = a(v_j,w_j), \quad \forall v_j, w_j \in W_j.
\end{equation}
For the space $V$, let $\tenq{D}$ be the diagonal part of the matrix $\tenq{A}$. We then define the operator $s(\cdot,\cdot) : V\times V\mapsto \Reals{}$ by the following relation:
\begin{equation}
s(v,w) = \tend{v}^T \tenq{D} \tend{w} \quad \forall v, w \in V,
\end{equation}
where $\tend{v}$ and $\tend{w}$ are the representation of $v$ and $w$ in $\Reals{N}$. We now equip the space $\overline{V}$ with the inner product defined as follows:
\begin{equation}
\overline{a}(\overline{v},\overline{v}) := s(v_0,v_0) + {a}_1(v_1,v_1) + {a}_2(v_2,v_2), \quad \forall \overline{v} = (v_0,v_1,v_2) \in \overline{V}.
\end{equation}
For $i=1,2$, we introduce an operator $\Pi^*_i : V \mapsto W_i$ defined as simple injections, i.e., $\Pi_i^* v = v_i$, $\forall v = v_1 + v_2 \in V$, with
$v_i \in W_i$ for $i=1,2$, and inclusion maps, $\Pi_i :  W_i \mapsto V$. $\Pi : \overline{V} \mapsto V$ can be defined by
\begin{equation}
\Pi \overline{v} = v_0 + \Pi_1 v_1 + \Pi_2 v_2, \quad \forall \overline{v} = (v_0, v_1,v_2) \in \overline{{V}}.
\end{equation}
We remark that $\Pi_1$ and $\Pi_2$ are simply the identity which are simpler than those for DG.
The preconditioner can be stated as follows:
\begin{equation}
{B} = {S}^{-1} + \Pi_1 \circ {A}_1^{-1}\circ \Pi_1^* + \Pi_2 \circ {A}_2^{-1}\circ \Pi_2^*,
\end{equation}
To establish the quality of the preconditioner ${B}$, we shall need to establish the estimate \eqref{est1}, but this is trivial.
Secondly, we shall prove
\begin{lemma}
For any $v \in V$, we have that
\begin{equation}\label{mgcond1}
\|v\|_A^2 \lesssim c_S \|v\|_S^2.
\end{equation}
\end{lemma}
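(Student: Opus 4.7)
The plan is to use the natural direct-sum decomposition $E_h = \widehat{S}_h(\Omega) \oplus C_h(\Omega)$, which is direct because elements of $\widehat{S}_h(\Omega)$ vanish strongly on $\partial \Omega$ whereas non-zero piecewise constants do not. Given $v \in V = E_h$, I write $v = v_1 + v_2$ with $v_1 \in W_1 = \widehat{S}_h(\Omega)$ and $v_2 \in W_2 = C_h(\Omega)$. Because the basis of $E_h$ is the disjoint union $\{\phi_j\} \cup \{\psi_k\}$ and the blocks $\tenq{A}_{11}$, $\tenq{A}_{22}$ sit on the diagonal of $\tenq{A}$, the smoother norm splits cleanly:
\begin{equation*}
\|v\|_S^2 = \sum_j (v_1^{(j)})^2 \tenq{A}_{11}(j,j) + \sum_k (v_2^{(k)})^2 \tenq{A}_{22}(k,k).
\end{equation*}
By the triangle inequality $\|v\|_A^2 \leq 2(\|v_1\|_A^2 + \|v_2\|_A^2)$, so it suffices to bound each $\|v_i\|_A^2$ by the corresponding piece of $\|v\|_S^2$.

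For the conforming component $v_1 \in W_1$, I note that $v_1$ is continuous across every interior edge and vanishes on every boundary edge, so all jump and consistency terms in $a_h(v_1,v_1)$ drop out, leaving $\|v_1\|_A^2 = \sum_T \int_T \beta |\nabla v_1|^2\, dx$. I then invoke the standard local $\mathcal{P}^1$ estimate $\|\nabla \what{\lambda}_j\|_{0,T}^2 \lesssim 1$ (valid uniformly in $h$ for the modified IFEM basis in both interface and non-interface elements) together with shape regularity to bound $\|v_1\|_A^2 \lesssim \overline{\beta} \sum_j (v_1^{(j)})^2$; combining this with $\tenq{A}_{11}(j,j) \gtrsim \underline{\beta}$ yields $\|v_1\|_A^2 \lesssim (\overline{\beta}/\underline{\beta}) \sum_j (v_1^{(j)})^2 \tenq{A}_{11}(j,j)$.

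For $v_2 = \sum_T c_T \psi_T \in W_2$, elementwise gradients vanish, so only the penalty terms of $a_h(v_2,v_2)$ contribute. On an interior edge $e = T^+ \cap T^-$ one has $|\jump{v_2}|^2 = (c_{T^+}-c_{T^-})^2 \leq 2(c_{T^+}^2 + c_{T^-}^2)$, while on a boundary edge of $T$, $|\jump{Q_e^0(v_2)}|^2 = c_T^2$. A straightforward edge-by-edge summation regrouped by element gives $\|v_2\|_A^2 \leq 2 \sum_T c_T^2 \tenq{A}_{22}(T,T)$. Combining the two pieces delivers $\|v\|_A^2 \leq c_S \|v\|_S^2$, with $c_S$ depending on shape regularity of $\mathcal{T}_h$ and the contrast $\overline{\beta}/\underline{\beta}$ but independent of $h$.

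The only delicate point is ensuring that the local inverse-type bound for $v_1$ holds \emph{uniformly} in $h$ across interface elements, where the modified shape functions $\what{\lambda}_j$ replace the standard barycentric coordinates; this is where the $h$-independent bounds on $\what{\lambda}_j$ from the IFEM literature (the same estimates underlying Lemma~\ref{C_pi}) are essential. The bound for $v_2$ is essentially algebraic once the structure of $\tenq{A}_{22}$ is unwound.
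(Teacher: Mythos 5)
Your overall strategy---splitting $v$ along the direct sum $E_h=\widehat{S}_h(\Omega)\oplus C_h(\Omega)$ (direct thanks to the strong Dirichlet condition on $\widehat{S}_h$) and bounding each component against the corresponding diagonal block of $\tenq{A}$---is legitimate, and it is a genuinely different route from the paper's, which disposes of the lemma in one line by Cauchy--Schwarz and an inverse inequality, i.e.\ the generic argument that $\tend{v}^T\tenq{A}\tend{v}\lesssim \tend{v}^T\tenq{D}\tend{v}$ because each row of the sparse matrix $\tenq{A}$ has a bounded number of nonzeros, each dominated by the geometric mean of the corresponding diagonal entries. Your treatment of the piecewise-constant component $v_2$ is correct: elementwise gradients vanish, only the penalty terms survive, and the edge-by-edge regrouping gives $\|v_2\|_A^2\leq 2\sum_T c_T^2\,\tenq{A}_{22}(T,T)$.

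The $W_1$ step, however, contains a genuine gap. Functions in $\widehat{S}_h(\Omega)$ are \emph{not} continuous across interior edges in general: on an edge of an interface element, the modified basis function $\what{\lambda}_j$ agrees with its neighbor only at the two vertices, while its trace along the edge is a broken linear function with a kink where $\Gamma$ crosses the edge, which need not coincide with the (differently broken, or genuinely linear) trace from the other side. This nonconformity is exactly why the paper uses a \emph{partially penalized} bilinear form with jump, average, and penalty terms over all of $\mathcal{E}_h$, and why the block $\tenq{A}_{11}$ displayed in Section~5 carries nonzero average/jump/penalty contributions. Your identity $\|v_1\|_A^2=\sum_T\int_T\beta|\nabla v_1|^2\,dx$ is therefore false on interface edges: the terms $\int_e\av{\beta\nabla v_1}\jump{v_1}\,ds$ and $\tfrac{\sigma}{|e|}\int_e\jump{v_1}^2\,ds$ survive. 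The step is repairable---Lemma~\ref{trace_like} controls the averages, and the $h$-uniform bounds on $\what{\lambda}_j$ and $\nabla\what{\lambda}_j^{\pm}$ that you already invoke give $|\jump{\what{\lambda}_j}|\lesssim 1$ pointwise on such edges, hence $\tfrac{1}{|e|}\|\jump{v_1}\|_{0,e}^2$ is locally bounded by the squared nodal coefficients, so all surviving terms are still $\lesssim\sum_j (v_1^{(j)})^2\,\tenq{A}_{11}(j,j)$ up to the contrast $\overline{\beta}/\underline{\beta}$---but as written, the assertion that all jump and consistency terms drop out for $v_1$ is incorrect and must be replaced by this estimate.
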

\begin{proof}
We note that $\|v\|_A^2 := A(v,v), \,  \|v\|_S^2 := S(v,v)$. This inequality \eqref{mgcond1} is due to the Cauchy-Schwarz and inverse inequality. This completes the proof.
\end{proof}
We now state and prove the last step:
\begin{lemma}\label{mainest}
For all $v \in V$, there exist $v_0 \in V$, $v_1 \in W_1$  and $v_2 \in W_2$ such that $v = v_0 + v_1 + v_2$ and
\begin{equation}
s(v_0,v_0) + \|v_1\|_A^2 + \|v_2\|_{A}^2 \leq c_0^2  \|v\|_A^2.
\end{equation}
\end{lemma}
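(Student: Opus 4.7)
The plan is to take $v_0 = 0$ and use the unique algebraic splitting $v = v_1 + v_2$ with $v_1 \in W_1 = \widehat{S}_h(\Omega)$ and $v_2 \in W_2 = C_h(\Omega)$. This decomposition is well-defined because, as recorded in Remark~\ref{remsingular}, the strong Dirichlet boundary condition imposed on $\widehat{S}_h(\Omega)$ forces $W_1 \cap W_2 = \{0\}$: a piecewise-constant IFEM function vanishing on $\partial\Omega$ is identically zero on each boundary-touching element, and then by nodal connectivity everywhere. With this choice $s(v_0,v_0) = 0$ and $a_j$ agrees with $a_h$ on $W_j$, so the statement reduces to
\begin{equation*}
\|v_1\|_A^2 + \|v_2\|_A^2 \lesssim \|v\|_A^2.
\end{equation*}

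Using the norm equivalence $\|\cdot\|_A^2 \sim \tnorm{\cdot}_h^2$ (Lemma~\ref{lemma:corc} and continuity) together with $\nabla v_2 \equiv 0$ on every $T \in \mathcal{T}_h$, the common gradient contribution $\sum_T \|\nabla v_1\|_{0,T}^2$ appears on both sides and cancels. What remains is to dominate $\sum_e |e|^{-1}\bigl(\|\jump{v_1}\|_{0,e}^2 + \|\jump{v_2}\|_{0,e}^2\bigr)$ by $\sum_T \|\nabla v_1\|_{0,T}^2 + \sum_e |e|^{-1}\|\jump{v_1+v_2}\|_{0,e}^2$, and to absorb the elementwise $L^2$-contributions in the broken norm using the Poincar\'e inequality available on $W_1$ (zero boundary) together with an elementwise averaging identity $c_T = Q_T^0(v) - Q_T^0(v_1)$ for the piecewise-constant $v_2$.

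The critical ingredient, and the anticipated main obstacle, is an IFEM trace-type inequality
\begin{equation*}
\sum_{e \in \mathcal{E}_h} |e|^{-1}\|\jump{v_1}\|_{0,e}^2 \lesssim \sum_{T \in \mathcal{T}_h} \|\nabla v_1\|_{0,T}^2, \quad \forall\, v_1 \in \widehat{S}_h(\Omega).
\end{equation*}
The mechanism is that IFEM basis functions are continuous at every vertex, so $\jump{v_1}$ vanishes at both endpoints of each edge $e$; on non-interface edges the jump is identically zero, and on an edge cut by $\Gamma$ at a single point $X$, $\jump{v_1}|_e$ is piecewise affine vanishing at the endpoints, so $\|\jump{v_1}\|_{0,e}^2 \lesssim |e|\,|\jump{v_1}(X)|^2$; a Taylor argument based on nodal agreement together with the inverse estimate on piecewise-linear pieces then bounds $|\jump{v_1}(X)|^2$ by $|e|^2 h^{-2} \|\nabla v_1\|_{0,T^+\cup T^-}^2$.

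Once the trace inequality is available, the piecewise-constant block is handled by a projection argument: because $\jump{v_2}|_e$ is constant on each edge, it equals $Q_e^0\jump{v}|_e - Q_e^0\jump{v_1}|_e$, and the $L^2$-boundedness of $Q_e^0$ yields $\|\jump{v_2}\|_{0,e}^2 \leq 2\|\jump{v}\|_{0,e}^2 + 2\|\jump{v_1}\|_{0,e}^2$. Summing with the weights $|e|^{-1}$ and invoking the trace inequality delivers the assertion, with $c_0$ absorbing the coercivity/continuity constants and the geometric constants. The hardest part will be the trace inequality itself: its derivation is elementary in principle, but the hidden constants depend on the shape regularity of the cut sub-triangles $T^\pm$ and, through the inverse estimate on the IFEM basis, potentially on the contrast $\overline{\beta}/\underline{\beta}$; keeping these bounded under the standard regularity assumptions on the triangulation relative to $\Gamma$ is the delicate point of the whole argument.
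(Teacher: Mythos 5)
Your proposal is correct in outline, but it takes a genuinely different route from the paper. The paper defines $v_1$ and $v_2$ as the $A$-orthogonal projections of $v$ onto $W_1$ and $W_2$ (so $\|v_j\|_A\le\|v\|_A$ is immediate), accepts a nonzero remainder $v_0=v-v_1-v_2$, and concentrates all the work on the bound $\|v_0\|_S\lesssim\|v\|_A$; that bound is obtained by an Aubin--Nitsche duality argument, solving an adjoint interface problem with data $D_h(v-v_1)$ and invoking the elliptic regularity of Proposition~\ref{main:them} together with the approximation property of $\widehat S_h(\Omega)$. You instead exploit that $E_h=\widehat S_h(\Omega)\oplus C_h(\Omega)$ is a direct sum (correctly justified from the strong Dirichlet condition, cf.\ Remark~\ref{remsingular}), take $v_0=0$, and reduce everything to the discrete jump/trace inequality $\sum_e|e|^{-1}\norm{\jump{v_1}}_{0,e}^2\lesssim\sum_T\norm{\nabla v_1}_{0,T}^2$ for IFEM functions, which is indeed true (the jump vanishes on non-interface edges and at the endpoints of cut edges, and the cut-point value is controlled by elementwise gradients via an inverse estimate on the cut pieces); this is a known estimate in the partially penalized IFEM literature. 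What each approach buys: yours is more elementary and purely local, avoiding duality and global regularity entirely, and it shows the lower bound of the stable splitting holds even without the smoother component; the price is that the constant inherits the dependence on the contrast $\overline\beta/\underline\beta$ and on the geometry of the cut sub-elements through the inverse estimate, exactly as you flag (note this is consistent with the observed degradation of the unsmoothed preconditioner at high contrast in Table~10). The paper's argument hides a comparable contrast dependence in the regularity constant of \eqref{C_R} instead, and relies on machinery (consistency of $a_h$ against the adjoint solution, Lemma~\ref{C_pi}) that your route does not need. To make your proof complete you would still have to supply the trace inequality with explicit tracking of the cut-geometry constants and a broken Poincar\'e inequality to recover the $L^2$ part of $\tnorm{\cdot}_h$, but there is no gap in the strategy itself.
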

\begin{proof}
Given $v \in V$, we define $v_j \in W_j$ for $j = 1,2$, by the solution to the following equation:
\begin{equation}\label{s11block}
(v_j,w_j)_{A} = (v, w_j)_{A}, \quad \forall w_j \in W_j.
\end{equation}
Then, it is immediate that $\|v_j\|_{A} \lesssim \|v\|_{A}$ for $j = 1, 2$. We now define $v_0$ by
\begin{equation}
v_0 = v - v_1 - v_2 \in V,
\end{equation}
then, it is enough to show that
$$
\|v_0\|_{S} \lesssim \|v\|_{A}.
$$
We note that
\begin{eqnarray*}
\|v_0\|_{S} = \|v - v_1 -v_2\|_{S} \lesssim \|v - v_1\|_S + \|v_2\|_{S} \lesssim \|v - v_1\|_S + \|v\|_{A}.
\end{eqnarray*}
Therefore, we shall show that $\|v - v_1\|_S \lesssim \|v\|_{A}$. Let $\chi_{_E}$ be a characteristic function, which is one on any subset $E \subset \Omega$ and zero elsewhere. We then consider the adjoint problem to find $\psi \in \widetilde{H}_0^1(\Omega)$ such that with $\beta_T = \beta|_T$ for any $T \in\mathcal{T}_h$,
\begin{equation}
- \nabla \cdot \beta \nabla \psi =  D_h(v-v_1)  \,\, \mbox{ in } \Omega, 
\end{equation}
subject to the interface conditions and homogeneous boundary condition on $\partial \Omega$. Here $D_h: V \mapsto V$ is the positive operator that satisfies the following relation:
$$
(D_h v,v)_0 = (D_h^{1/2} v, D_h^{1/2}v)_0 = h^2 s(v,v), \quad \forall v \in V.
$$
Then we have that for all $\mu \in W_1$,
\begin{subeqnarray*}
\|D_h^{1/2}(v-v_1)\|^2_0 &=& (v - v_1, \psi)_{A} \\
&=& (v - v_1, \psi - \mu)_{A}, \\
&\lesssim& \|v - v_1\|_{A} \|\psi - \mu\|_{A} \\
&\lesssim& h \|v - v_1\|_A \|D_h^{1/2}(v-v_1)\|_0,
\end{subeqnarray*}
where the last inequality is due to the elliptic regularity stated in Proposition \ref{main:them}. This gives that
\begin{subeqnarray*}
\|v-v_1\|_S^2 = h^{-2}\|D_h^{1/2}(v - v_1)\|_0^2 \lesssim \|v - v_1\|^2_{A}.
\end{subeqnarray*}
This completes the proof.
\end{proof}

In the remaining section, we show that the block matrix $\tenq{A}_{22}$ can be easily solved by a classical algebraic multigrid method. We recall that a matrix
$\tend{M} = (M_{ij})$ is an M-matrix if it is irreducible, i.e., the graph corresponding to $C$ is connected and the following conditions hold:
\begin{subeqnarray}
&& {M}_{jj} > 0, \quad \forall 1 \le j \le n, \\
&& {M}_{ij} \le 0, \quad \forall i,j: i \neq j, \\
&& {M}_{jj} \ge \sum_{i =1: \neq j}^n |M_{ij}|, \quad \forall 1 \le j \le n, \\
&& {M}_{jj} > \sum_{i =1: \neq j}^n |M_{ij}|, \quad \mbox{ for at least one }j.
\end{subeqnarray}

The following Lemma indicates that the block matrix $\tenq{A}_{22}$ can be easily handled by a classical algebraic multigrid method.
\begin{lemma}
The matrix $\tenq{A}_{22}$ is M-matrix and weakly diagonally dominant.
\end{lemma}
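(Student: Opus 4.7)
The plan is to compute the entries of $\tenq{A}_{22}$ explicitly using the specific structure of the piecewise constant basis $\{\psi_j\}$, and then verify each of the four M-matrix conditions together with weak diagonal dominance and irreducibility directly from these formulas.

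First I would exploit the fact that $\psi_j|_{T_\ell} = \delta_{j\ell}$, so for any edge $e \in \mathcal{E}_h^o$ shared by $T^+$ and $T^-$, the jump $\jump{\psi_j} = \psi_j^+ \bn^+ + \psi_j^- \bn^-$ is nonzero only when $e \subset \partial T_j$, in which case $\jump{\psi_j} = \bn^{T_j}$, the unit normal outward from $T_j$. For $e \in \mathcal{E}_h^\partial \cap \partial T_j$, $\jump{\psi_j} = \bn$. Substituting these into
\begin{equation*}
\tenq{A}_{22}(i,j) = \sum_{e \in \mathcal{E}_h} \frac{1}{|e|}\int_e \sigma(\beta)\, \jump{\psi_j}\cdot \jump{\psi_i}\, ds,
\end{equation*}
one finds three cases: (i) if $i = j$, only edges of $T_j$ contribute, each giving $\sigma(\beta_e)$, so $\tenq{A}_{22}(j,j) = \sum_{e \subset \partial T_j}\sigma(\beta_e) > 0$; (ii) if $i \neq j$ and $T_i, T_j$ share an interior edge $e_{ij}$, then $\bn^{T_j} \cdot \bn^{T_i} = -1$ and $\tenq{A}_{22}(i,j) = -\sigma(\beta_{e_{ij}}) < 0$; (iii) otherwise $\tenq{A}_{22}(i,j) = 0$.

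Given these formulas, the sign conditions for the M-matrix are immediate. Weak diagonal dominance follows by comparing the diagonal entry with the absolute off-diagonal row sum:
\begin{equation*}
\tenq{A}_{22}(j,j) - \sum_{i \neq j}|\tenq{A}_{22}(i,j)| = \sum_{e \subset \partial T_j \cap \mathcal{E}_h^\partial}\sigma(\beta_e) \geq 0,
\end{equation*}
since the interior edges of $T_j$ are in one-to-one correspondence with its off-diagonal neighbors. This quantity vanishes for interior elements, giving only weak dominance globally, but is strictly positive whenever $T_j$ has a boundary edge, which supplies the strict inequality required for at least one index $j$. Irreducibility reduces to showing that the element-adjacency graph of $\mathcal{T}_h$ is connected, which holds for any conforming triangulation of the connected domain $\Omega$.

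I do not anticipate any real obstacle here. The only step that needs a bit of care is bookkeeping the sign of the jumps on interior versus boundary edges and the exact cancellation in the dominance computation; everything else is a direct consequence of the disjoint-support structure of the piecewise constant basis and the assumption $\sigma(\beta) > 0$ on every edge.
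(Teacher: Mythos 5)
Your proposal is correct and follows essentially the same route as the paper: direct sign analysis of the entries of $\tenq{A}_{22}$ using the disjoint supports of the piecewise constants, followed by the observation that weak diagonal dominance becomes strict exactly for elements with a boundary edge. Your version is in fact slightly more careful than the paper's, which asserts $\tenq{A}_{22}(i,j)<0$ for all $i\neq j$ without noting that the entry vanishes for non-adjacent elements, and which does not address irreducibility at all.
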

\begin{proof}
It is immediate to see that the time derivative term restricted on $W_2$ is positive diagonal and
\begin{equation}
\tenq{A}_{22}(i,i) =  \sum_{e \in \mathcal{E}_h} \frac{1}{|e|} \int_e \sigma(\beta) \jump{\psi_i}\jump{\psi_i} \, ds > 0.
\end{equation}
On the other hand, for $i \neq j$, $
\tenq{A}_{22}(i,j) =  \sum_{e \in \mathcal{E}_h} \frac{1}{|e|} \int_e \sigma(\beta) \jump{\psi_j}\jump{\psi_i}\, ds < 0.$
Furthermore, we have $\sum_{j=1, j\neq i}^{N_e} \left |A_{ij} \right | \leq A_{ii}, \,\,\forall i \geq 1$ and the strict inequality can be achieved when the triangle $T_i \in \mathcal{T}_h$ has at least one edge that is not shared by a neighbor triangle. This shows that the matrix $\tenq{A}_{22}$ is weakly diagonally dominant. This completes the proof.
\end{proof}

\section{Numerical experiments}

In this section, we present some numerical experiments to confirm the theoretical developments. We shall report the numerical error estimate of EIFEM as well as the efficiency of auxiliary space preconditioner.

We consider a model equation on $\Omega = (-1,1)^2$ with a circle-shaped interface $x^2+y^2=0.4^2$. The analytic solution is well-known to be given as
\begin{eqnarray*}
\displaystyle
p = \left\{
	\begin{array}{ll}
	r^3/\beta^-   & \text{in } \Omega^-, \\
	r^3/\beta^+ + \left(\frac{1}{\beta^-} - \frac{1}{\beta^+} \right)0.4^3 & \text{in } \Omega^+,
	\end{array}
	\right.
\end{eqnarray*}
where a number of jump discontinuities of $\beta$, i.e., $\beta^+$ and $\beta^-$ have been attempted. Note that the numerical solutions were conducted on a uniform triangulation $\T_h$ by rectangles whose size is $h$. We report the results with the various contrast of $\beta^+$ and $\beta^-$ across the interface, i.e., $\beta^-/\beta^+=1,10,100,1000$. The graphs of numerical solution of primary and velocity variable for the case of $(\beta^-, \beta^+)=(100,1)$ are reported in Figure \ref{fig:interel}.

\subsection{Numerical error analysis of EIFEM}

We report the $L^2$ and $H^1$-errors of the primary variable $p$, in Table 1, 3, 5, 7, and $L^2$, $H({\rm div})$ and local conservation errors of the velocity variable in Table 2, 4, 6, 8. Note that the local conservation error is defined by
\begin{equation*}
\| \nabla \cdot \bu_h - f \|_{L^\infty(\mathcal{T}_h)} := \max_{T\in\mathcal{T}_h} \norm{\nabla \cdot \bu_h - f }_{L^\infty(T)}.
\end{equation*}
The coefficients contrast are: $\beta^-/\beta^+=1$ for Table 1 - 2, $\beta^-/\beta^+=10$ for Table 3-4, $\beta^-/\beta^+=100$ for Table 5-6 and $\beta^-/\beta^+=1000$ for Table 7-8. In all cases, we observe the optimal convergency for both the variables in terms of $L^2$, $H^1$ and $H({\rm div})$ for different ratio of the contrast in $\beta$ as predicted by the theory. The local conservation errors are also observed to be below $E-11$ when $h < 1/64$, showing that our scheme is indeed locally conservative.

\begin{figure}[ht]
  \begin{center}
 \includegraphics[width=5.5cm]{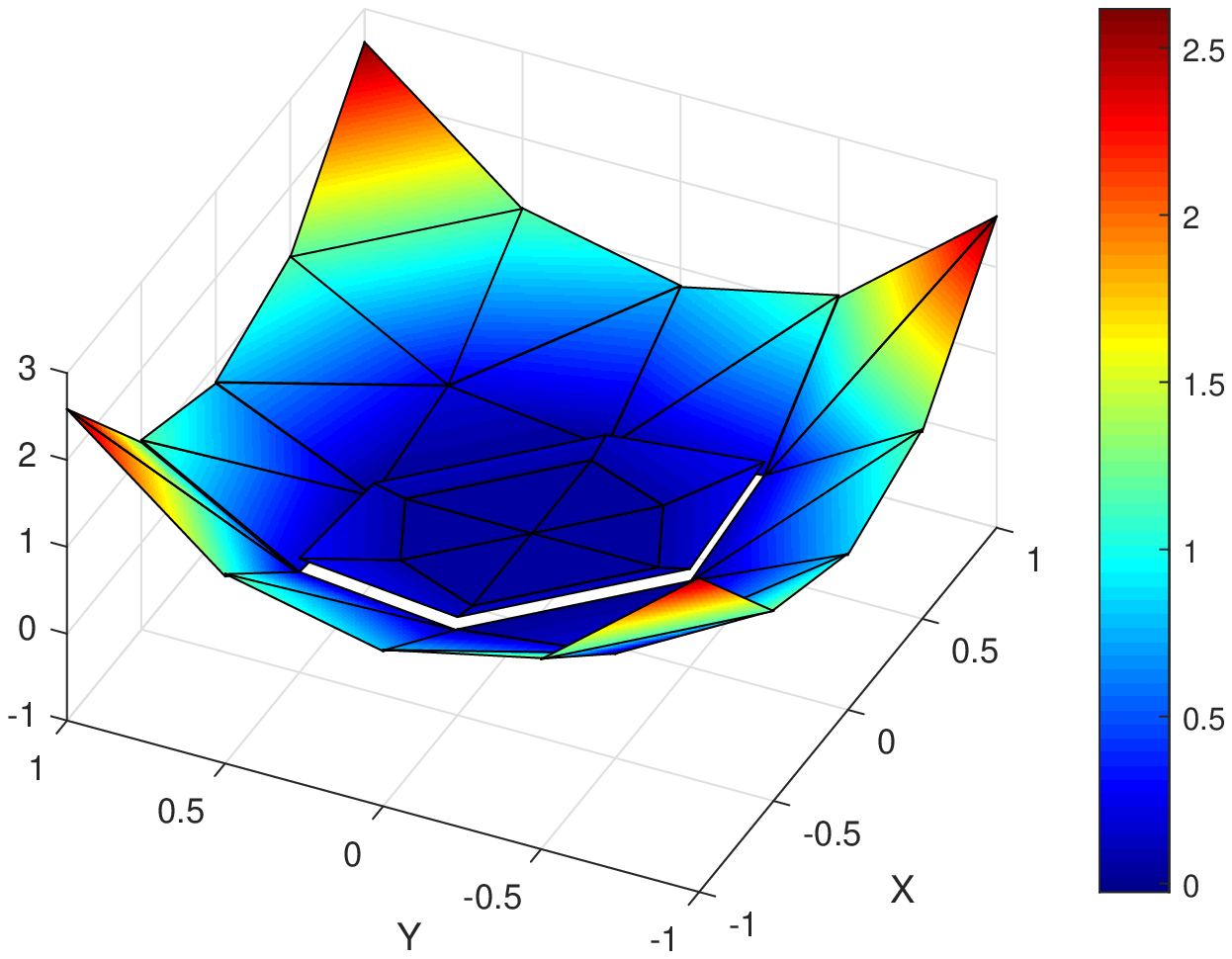}
 \includegraphics[width=5.5cm]{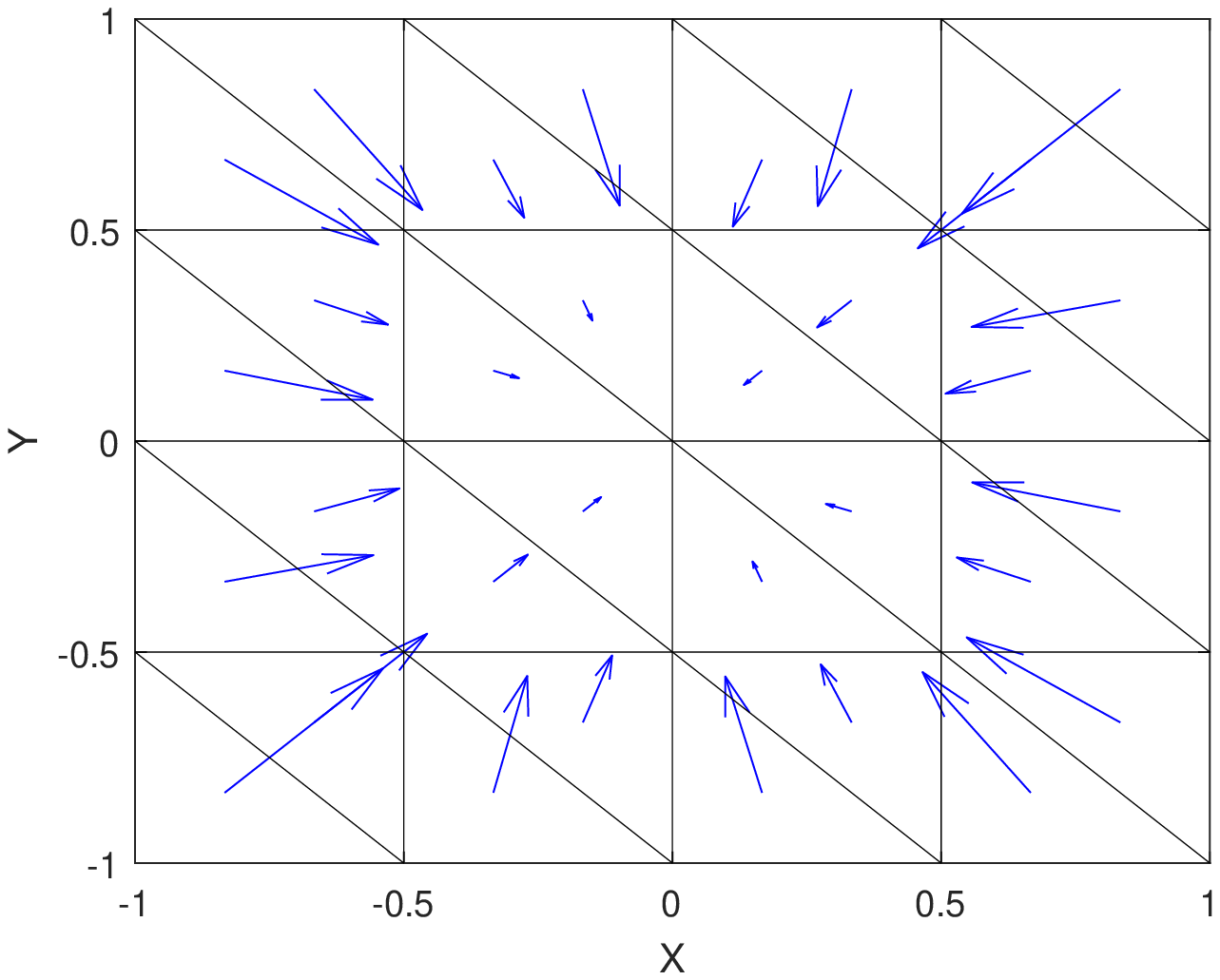}
  \caption{Numerical solution for pressure and velocity for $(\beta^-,\beta^+) = (100,1)$.}
 \label{fig:interel}
\end{center}
\end{figure}

\begin{table}
\begin{center}
\begin{tabular}{||c|c|c||c|c||}
\hline
$1/h$  & $\norm{p-p_{h}}_{0,\Omega}$  & Order & $\norm{p-p_{h}}_{1,h}$ & Order  \\ 
\hline \hline
$16$ & 2.242 E-3                                    &      x   & 2.044 E-1                      &  x \\
\hline
$32$ & 5.850 E-4  & 1.938 & 1.021 E-1  & 1.002 \\
\hline
$64$ & 1.493 E-4  & 1.970 & 5.102 E-2  & 1.001 \\
\hline
$128$& 3.773 E-5  & 1.985 & 2.550 E-2  & 1.001 \\
\hline
$256$& 9.480 E-6  & 1.993 & 1.275 E-2  & 1.000 \\
\hline
$512$& 2.376 E-6  & 1.996 & 6.373 E-3  & 1.000 \\
\hline
\end{tabular}
\end{center}\caption{Potential Error Behavior in $L^2$ and Energy norm for $(\beta^-,\beta^+) = (1,1)$.}
\end{table}

\begin{table}
\begin{center}
\begin{tabular}{||c|c|c||c|c|c||}
  \hline
  $1/h$  & $\norm{\bu-\bu_{h}}_{0,\Omega}$  & Order & $\norm{\nabla \cdot (\bu-\bu_{h})}_{0,\Omega}$ & Order & $\| \nabla \cdot \bu_h - f \|_{L^\infty(\mathcal{T}_h)}$ \\
  \hline \hline
  $16$ & 7.360 E-2 &   x      & 2.651 E-1 & x & 4.244 E-08 \\
  \hline
  $32$ & 3.655 E-2 & 1.010 & 1.326 E-1 & 1.000 & 1.675 E-09 \\
  \hline
  $64$ &  1.823 E-2 & 1.004 & 6.629 E-2 & 1.000 & 6.179 E-11 \\
  \hline
  $128$& 9.106 E-3 & 1.002 & 3.315 E-2 & 1.000 & 3.768 E-12 \\
  \hline
  $256$& 4.551 E-3 & 1.001 & 1.657 E-2 & 1.000 & 5.439 E-12 \\
  \hline
  $512$& 2.275 E-3 & 1.000 & 8.269 E-3 & 1.000 & 3.727 E-12 \\
  \hline
  \end{tabular}
\end{center}
\caption{Flux Error Behavior in $L^2$, $H({\rm div},\Omega)$ and Local conservation for $(\beta^-,\beta^+) = (1,1)$.}
 \label{Error_Table1}
\end{table}

\begin{table}
\begin{center}
\begin{tabular}{||c|c|c||c|c||}
\hline
$1/h$  & $\norm{p-p_{h}}_{0,\Omega}$  & Order & $\norm{p-p_{h}}_{1,h}$ & Order  \\ 
  \hline
  $16$ & 2.381 E-3  &     x  & 2.029 E-1 &  x \\ \hline
  $32$ & 6.174 E-4  & 1.947 & 1.013 E-1 & 1.002 \\ \hline
  $64$ & 1.581 E-4  & 1.966 & 5.063 E-2 & 1.001 \\ \hline
  $128$& 3.996 E-5  & 1.984 & 2.531 E-2 & 1.000 \\ \hline
  $256$& 1.005 E-5  & 1.992 & 1.265 E-2 & 1.000 \\  \hline
  $512$& 2.516 E-6  & 1.998 & 6.325 E-3 & 1.000 \\  \hline
  \end{tabular}
\end{center} \caption{Potential Error Behavior in $L^2$ and Energy norm for $(\beta^-,\beta^+) = (10,1)$.}
\end{table}

\begin{table}
\begin{center}
\begin{tabular}{||c|c|c||c|c|c||}
  \hline
  $1/h$  & $\norm{\bu-\bu_{h}}_{L^2(\Omega)}$  & order & $\norm{\nabla \cdot (\bu-\bu_{h})}_{0,\Omega}$ & order & $\| \nabla \cdot \bu_h - f \|_{L^\infty(\mathcal{T}_h)}$ \\
  \hline
  $16$ & 9.714 E-2 &  x    & 2.651 E-1 & x & 4.244 E-08 \\ \hline
  $32$ & 6.210 E-2 & 1.003 & 1.326 E-1 & 1.000 & 1.675 E-09 \\ \hline
  $64$&  2.186 E-2 & 1.030 & 6.629 E-2 & 1.000 & 6.186 E-11 \\ \hline
  $128$& 9.591 E-3 & 1.010 & 3.315 E-2 & 1.000 & 4.131 E-12 \\ \hline
  $256$& 4.685 E-3 & 1.003 & 1.657 E-2 & 1.000 & 5.669 E-12 \\  \hline
  $512$& 2.278 E-3 & 1.001 & 8.286 E-3 & 1.000 & 3.018 E-12 \\  \hline
  \end{tabular}
\end{center} \caption{Flux Error Behavior in $L^2$, $H({\rm div},\Omega)$ and Local conservation for $(\beta^-,\beta^+) = (10,1)$.}
 \label{Error_Table2}
\end{table}

\begin{table}
\begin{center}
\begin{tabular}{||c|c|c||c|c||}
\hline
$1/h$  & $\norm{p-p_{h}}_{0,\Omega}$  & Order & $\norm{p-p_{h}}_{1,h}$ & Order  \\ \hline
  $16$ & 2.357 E-3  &  x    & 2.031 E-1 & x \\ \hline
  $32$ & 6.059 E-4  & 1.960 & 1.014 E-1 & 1.003 \\ \hline
  $64$ & 1.578 E-4  & 1.941 & 5.064 E-2 & 1.001 \\ \hline
  $128$& 4.021 E-5  & 1.973 & 2.531 E-2 & 1.001 \\ \hline
  $256$& 1.011 E-5  & 1.991 & 1.265 E-2 & 1.000 \\  \hline
  $512$& 2.534 E-6  & 1.997 & 6.325 E-3 & 1.000 \\  \hline
  \end{tabular}
\end{center}\caption{Potential Error Behavior in $L^2$ and Energy norm for $(\beta^-,\beta^+) = (100,1)$.}
\end{table}

\begin{table}
\begin{center}
\begin{tabular}{||c|c|c||c|c|c||}
  \hline
  $1/h$  & $\norm{\bu-\bu_{h}}_{L^2(\Omega)}$  & order & $\norm{\nabla \cdot (\bu-\bu_{h})}_{0,\Omega}$ & order & $\| \nabla \cdot \bu_h - f \|_{L^\infty(\mathcal{T}_h)}$ \\ \hline
  $16$ & 9.714 E-2 &    x  & 2.651 E-1 & x & 4.244 E-08     \\ \hline
  $32$ & 6.210 E-2 & 0.646 & 1.326 E-1 & 1.000 & 1.675 E-09 \\ \hline
  $64$&  2.186 E-2 & 1.506 & 6.629 E-2 & 1.000 & 6.459 E-11 \\ \hline
  $128$& 9.591 E-3 & 1.189 & 3.315 E-2 & 1.000 & 4.409 E-12 \\ \hline
  $256$& 4.685 E-3 & 1.034 & 1.657 E-2 & 1.000 & 1.587 E-11 \\  \hline
  $512$& 2.358 E-3 & 0.991 & 8.286 E-3 & 1.000 & 4.989 E-12 \\  \hline
  \end{tabular}
\end{center} \caption{Flux Error Behavior in $L^2$, $H({\rm div},\Omega)$ and Local conservation for $(\beta^-,\beta^+) = (100,1)$.}
\label{Error_Table3}
\end{table}

\begin{table}
\begin{center}
\begin{tabular}{||c|c|c||c|c||}
\hline
  $1/h$  & $\norm{p-p_{h}}_{L^2(\Omega)}$  & Order & $\norm{p-p_{h}}_{1,h}$ & Order  \\
  \hline
  $16$ & 2.372 E-3  &  x     & 2.037 E-1  &  x \\ \hline
  $32$ & 6.283 E-4  & 1.917 & 1.017 E-1 & 1.002 \\ \hline
  $64$ & 1.569 E-4  & 2.002 & 5.069 E-2 & 1.005 \\ \hline
  $128$& 3.991 E-5  & 1.974 & 2.531 E-2 & 1.002 \\ \hline
  $256$& 1.009 E-5  & 1.984 & 1.265 E-2 & 1.000 \\  \hline
  $512$& 2.531 E-6  & 1.995 & 6.326 E-3 & 1.000 \\  \hline
  \end{tabular}
\end{center}\caption{Potential Error Behavior in $L^2$ and Energy norm for $(\beta^-,\beta^+) = (1000,1)$.}
\end{table}

\begin{table}
\begin{center}
\begin{tabular}{||c|c|c||c|c|c||}
  \hline
  $1/h$  & $\norm{\bu-\bu_{h}}_{L^2(\Omega)}$  & Order & $\norm{\nabla \cdot (\bu-\bu_{h})}_{0,\Omega}$ & Order & $\| \nabla \cdot \bu_h - f \|_{L^\infty(\mathcal{T}_h)}$ \\   \hline
  $16$ & 7.338 E-1 &      x   & 2.651 E-1 & x & 4.244 E-08  \\ \hline
  $32$ & 2.401 E-1 & 1.612 & 1.326 E-1 & 1.000 & 1.675 E-09 \\ \hline
  $64$&  7.518 E-2 & 1.675 & 6.629 E-2 & 1.000 & 6.325 E-11 \\ \hline
  $128$& 2.547 E-2 & 1.562 & 3.315 E-2 & 1.000 & 1.068 E-11 \\ \hline
  $256$& 8.785 E-3 & 1.535 & 1.657 E-2 & 1.000 & 6.325 E-11 \\  \hline
  $512$& 2.358 E-3 & 1.897 & 8.286 E-3 & 1.000 & 5.825 E-12\\  \hline
  \end{tabular}
\end{center}
\caption{Flux Error Behavior in $L^2$, $H({\rm div},\Omega)$ and Local conservation for $(\beta^-,\beta^+) = (1000,1)$.}
 \label{Error_Table4}
\end{table}

\subsection{Performance of auxiliary space preconditioner for EIFEM}
In this subsection, we demonstrate the performance of auxiliary space preconditioner for EIFEM introduced and analyzed in Section 5. Note that AMG is used as a preconditioner for $\tenq{A}_{11}$ and $\tenq{A}_{22}$. For the implementation of AMG preconditioner, we employed the c$++$ open software library AMGCL developed by Demidov \cite{demidovamgcl,demidov2019amgcl}. All experiments were conducted on PC with an Intel(R) Core(TM) i7-3770 CPU @ 3.40GHz processor.
Stopping criteria were given as a relative residual less than $10^{-7}$. In each auxiliary space preconditioner, maximum number of AMG iterations for $\tenq{A}_{11}$ and $\tenq{A}_{22}$ was set at the fixed number $5$. Namely, we use here the approximate $\tenq{A}_{11}$ and $\tenq{A}_{22}$. 

The PCG iteration number and total CPU time are reported in Table 9-10. We use only one Gauss-Seidel smoothing as a part of the auxiliary space preconditioning. The corresponding result is reported in Table 9. Note that the case when no smoothing is used is also attempted and this is reported in Table 10. Our observation is that with the addition of smoothing, the auxiliary space solver works as robust solver as theory predicted. The PCG iteration numbers are bounded as $h$ gets smaller.
Thus, the computational costs for PCG preconditioned by Algorithm 5.1. is justified to be of $\mathcal{O}(N)$, where $N$ is the number of unknowns. We would like to note that Algorithm 5.1. with no smoothing works fine for some special case when the contrast ratio of $\beta$ is not too large as reported in Table 10.

\begin{table}[!ht]
\centering
\begin{tabular}{r|rr}
\hline
Case 1. & PCG it &  CPU time  \\
$1/h$	&  &  \\
\hline
$32$ & 11  & 0.447 \\
$64$ & 11  & 0.803 \\
$128$ & 11 & 3.235 \\
$256$ & 11 & 14.587 \\
$512$ & 11 & 46.262 \\
\hline
\end{tabular}
\begin{tabular}{r|rr}
\hline
Case 2. & PCG it &  CPU time  \\
$1/h$	&  &  \\
\hline
$32$ & 11  & 0.529 \\
$64$ & 11  & 0.912 \\
$128$ & 11 & 2.893 \\
$256$ & 11 & 14.115 \\
$512$ & 11 & 51.709 \\
\hline
\end{tabular}
\begin{tabular}{r|rr}
\hline
Case 3. & PCG it &  CPU time  \\
$1/h$	&  &  \\
\hline
$32$ & 12  & 0.43 \\
$64$ & 13  & 1.16 \\
$128$ & 11 & 3.202 \\
$256$ & 11 & 11.956 \\
$512$ & 11 & 53.461 \\
\hline
\end{tabular}
\begin{tabular}{r|rr}
\hline
Case 4. & PCG it &  CPU time  \\
$1/h$	&  &  \\
\hline
$32$ & 14 & 0.52 \\
$64$ & 18 & 1.481 \\
$128$ & 20 & 5.615 \\
$256$ & 22 & 23.983 \\
$512$ & 21 & 96.468 \\
\hline
\end{tabular}
\caption{Performance of auxiliary space preconditioner-PCG with nGS=1.
Case 1., Case 2., Case 3. and  Case 4. correspond to $(\beta^-,\beta^+)=(1,1)$, $(\beta^-,\beta^+)=(1,10)$, $(\beta^-,\beta^+)=(1,100)$ and $(\beta^-,\beta^+)=(1,1000)$ respectively.}
\label{tab:CPU_TIMES_NGS1}
\end{table}

\begin{table}[!ht]
\centering
\begin{tabular}{r|rr}
\hline
Case 1. & PCG it &  CPU time  \\
$1/h$	&  &  \\
\hline
$32$ & 11  & 0.375 \\
$64$ & 11  & 0.742 \\
$128$ & 11 & 2.55 \\
$256$ & 11 & 10.094 \\
$512$ & 11 & 49.019 \\
\hline
\end{tabular}
\begin{tabular}{r|rr}
\hline
Case 2. & PCG it &  CPU time  \\
$1/h$	&  &  \\
\hline
$32$ & 13  & 0.505 \\
$64$ & 13  & 0.801 \\
$128$ & 13 & 2.646  \\
$256$ & 13 & 10.994 \\
$512$ & 13 & 72.935 \\
\hline
\end{tabular}
\begin{tabular}{r|rr}
\hline
Case 3. & PCG it &  CPU time  \\
$1/h$	&  &  \\
\hline
$32$ & 12  & 0.587 \\
$64$ & 13  & 1.134 \\
$128$ & 11 & 3.688 \\
$256$ & 11 & 15.95 \\
$512$ & 18 & 86.001  \\
\hline
\end{tabular}
\begin{tabular}{r|rr}
\hline
Case 4. & PCG it &  CPU time  \\
$1/h$	&  &  \\
\hline
$32$ & 23 & 0.906 \\
$64$ & 43 & 2.889 \\
$128$ & 83 & 18.6089 \\
$256$ & 51 & 41.001 \\
$512$ & 29 & 124.116 \\
\hline
\end{tabular}
\caption{Performance of auxiliary space preconditioner - PCG with nGS=0.
Case 1., Case 2., Case 3. and  Case 4. correspond to $(\beta^-,\beta^+)=(1,1)$, $(\beta^-,\beta^+)=(1,10)$, $(\beta^-,\beta^+)=(1,100)$ and $(\beta^-,\beta^+)=(1,1000)$ respectively.}
\label{tab:CPU_TIMES_NGS0}
\end{table}

\section{Concluding remarks}

In this paper, we have developed the locally conservative immersed finite element as well as a fast solver based on auxiliary space preconditioning. In our future work, we shall extend this method for coupled flow and transports as well as elasticity with interface.

\section{Declarations} 

The data that support the findings of this study are available from the corresponding author upon reasonable request.


\begin{acknowledgements}

First and second author are supported by the National Research Foundation of Korea (NRF) grant funded by the Korea government (MSIT) 
(No. 2020R1C1C1A01005396).
Third author is supported by Brain Pool Program through the National Research Foundation of Korea(NRF) funded by the Ministry of Science 
and ICT (grant number) (NRF-2020H1D3A2A01041079)
\end{acknowledgements}
 
%
%



\bibliographystyle{siam}
\bibliography{ref}

%
%

\end{document}